 \documentclass{amsart} 
 \usepackage{times}
\usepackage{latexsym,amssymb,amsmath,hyperref,mathrsfs}
\usepackage{xcolor}
\usepackage{bm}
\newcommand{\R}{{\mathbb R}}
\newcommand{\N}{{\mathbb N}}

\newcommand{\rY}{{\mathrm Y}}
\newcommand{\rZ}{{\mathrm Z}}
\newcommand{\rX}{{\mathrm X}}

\def\loc{\mathrm{loc}}

\def\XXint#1#2#3{{\setbox0=\hbox{$#1{#2#3}{\int}$}
     \vcenter{\hbox{$#2#3$}}\kern-.5\wd0}}

\newtheorem{thm}{Theorem}[section]  
\newtheorem{lem}[thm]{Lemma}	       
\newtheorem{crlr}[thm]{Corollary}      
\newtheorem{prp}[thm]{Proposition}     
\newtheorem{defin}[thm]{Definition}    
	
\usepackage{setspace}

\numberwithin{equation}{section}

\begin{document}

\title{Nonlocal problems for Laplace equation in Bochner spaces}

\author[B.Bilalov, S.Sadigova, P.Salerno, L.Softova]{Bilal Bilalov, Sabina Sadigova, Pia Salerno, Lubomira Softova}

\address{B. Bilalov, Yildiz Technical University, Istanbul, Turkiye}
\address{Institute of Mathematics, The Ministry of Science and Education, Baku, Azerbaijan}
\address{Azerbaijan University of Architecture and Construction, Baku, Azerbaijan}
\address{Odlar Yurdu University, Baku, Azerbaijan}
\email{bilal.bilalov@yildiz.edu.tr, bilalov.bilal@gmail.com, ORCID: 0000-0003-0750-9339.}

\address{S. Sadigova, Institute of Mathematics, The Ministry of Science and Education, Baku, Azerbaijan}
\email{sadigova.sr@gmail.com, s\_sadigova@mail.ru, ORCID: 0000-0003-4654-0494.}

\address{P. Salerno, Department of Mathematics, University of Salerno, 84084, Fisciano (SA), Italy}
\email{psalerno@unisa.it, ORCID: 0009000624391159.} 

\address{L. Softova, Department of Mathematics, University of Salerno, 84084, Fisciano (SA), Italy} \email{lsoftova@unisa.it, ORCID: 0000000294989088.} 

\subjclass[2020]{35J05, 46E35, 46M05, 58J50, }
\keywords{Laplace equation, Bochner-Sobolev space, nonlocal spectral problem, $\otimes$-basis.
}

\begin{abstract}
We study the Laplace equation posed in the semi-infinite strip $\Pi = I \times (0,\infty)$ with $I= (0,2\pi)$, and subject to nonlocal boundary conditions on $\partial \Pi$ in the trace sense. 
The analysis is carried out in the Bochner-Sobolev space $W^2_{p,1}(\Pi;\rX)$, associated with the Bochner space $L^{p,1}(\Pi;\rX)$, with $ p \in (1,\infty)$ and $\rX$ is a suitable Banach space. To solve the problem, we employ a generalized spectral  method. In particular, we introduce the notion of $\otimes$-basis generated by tensor products and extend the classical scheme known from the scalar case to the present setting. 

Moreover, we prove that the system of root functions of the corresponding nonlocal spectral problem forms a $\otimes$-basis in $L^p(I;\rX)$. 
\end{abstract}
\maketitle
\allowdisplaybreaks
\section{Introduction}\label{sec1}

Problems arising in mechanics, mathematical physics, and pure mathematics naturally lead to the study of differential equations in more general functional frameworks, such as Morrey, Grand Lebesgue, and Orlicz spaces (see, e.g., \cite{B1,B2,B4,B5,B7,BSaSf} and the references therein). These spaces provide a more flexible and robust setting, enabling a finer characterization of local regularity, growth conditions, and integrability properties of solutions. In particular, they are well suited for capturing nonstandard phenomena, including variable smoothness, non-uniform integrability, and anisotropic features that frequently arise in applications.

At the same time, many contemporary applications give rise to classes of equations that fall outside the scope of the classical theory (see, e.g., \cite{AlB,43,42,LR,Ma,Mo}). These include equations with discontinuous or merely measurable coefficients, nonlinear structures, and operators exhibiting nonstandard growth or degeneracy. Such challenges necessitate the development of new analytical approaches, as well as the extension and refinement of existing solvability results to broader and more realistic settings.

The present work is devoted to the generalization and further development of the aforementioned problems. In particular, we establish unique solvability results for a boundary value problem associated with the Laplace equation in an unbounded strip $\Pi \subset \mathbb{R}^2$. By imposing minimal regularity assumptions on the boundary data and adopting a non-harmonic analysis approach, we obtain solvability in Sobolev--Bochner spaces. Recall that Bochner integrability is defined with respect to a Banach space and extends the classical notion of the Lebesgue integral to Banach-valued measurable functions through the norm of $f$.

To this end, we introduce the basic notions of Bochner spaces and Unconditional Martingale Difference (UMD) spaces (see \cite{B99,DU,HNVW,W}).

In order to apply the generalized spectral method, we introduce the algebraic tensor product ($\otimes$-product) of two Banach spaces. Moreover, the $\otimes$-product structure on the Banach space $\rX$ allows us to extend the classical notions of basis and completeness to the notions of $\otimes$-basis and $\otimes$-completeness, respectively.

Since $\rX$ need not be a Hilbert space, we replace the notion of orthogonality with that of biorthogonality, more precisely, $\otimes$-biorthogonality (see \cite{B99,BHS,B4,B8,B0,HMVZ,Ma,Mo,S}).

Furthermore, we adapt the classical Fourier method of separation of variables to our framework, which enables us to prove the unique solvability of the modal boundary value problem \eqref{BVP} in the general class of Bochner--Sobolev spaces. This problem serves as a starting point for further investigations of the singular equation $ 
y^m u_{xx}+u_{yy}=0,$ $ m>-2,$ 
first studied by Moiseev in \cite{Mo}.

In this paper, we use the following  notation:
\begin{itemize}
\item $I := (0, 2\pi)$, $I_0 = (0,2\pi) \times \{0\} = \{(x,0) \ : \ x \in (0,2\pi) \}$. 
\item 
$
J_0 := \{(0,y) \ : \ y \in (0,\infty)\}, \ 
J_{2\pi}:= \{ (2\pi,y) \ : \ y \in (0,\infty)\}.
$ 
 	\item $\Pi := (0,2\pi) \times (0,\infty)$, \ 
    $\Pi_\xi := I \times (0,\xi)$ \ for all $\xi > 0$. 
    
    \item 
    $J^\xi_0 := \{(0,y) \ : \ y \in (0,\xi)\}$,  
    $J^\xi_{2\pi}:= \{ (2\pi,y) \ : \ y \in (0,\xi)\}$.

    \item For each Banach space $\rX$, 
    $\|\cdot\|_{\rX}$ denotes its norm, and $\rX^*$ its dual space.
    
    \item $[\rX;\rY]$ denotes the Banach space of bounded linear operators from $\rX$ to $\rY$,
    \mbox{$[\rX;\rX]=[\rX]$.}
    
    \item $\alpha = (\alpha_1,\ldots,\alpha_n)$ is a multi-index with 
    $|\alpha| = \sum_{k=1}^n\alpha_k $  and 
    $$
    \partial^\alpha u(x) =
  \partial_{x_1}^{\alpha_1} \cdots\partial_{x_n}^{\alpha_n} u(x),\quad x\in \mathbb{R}^n.
    $$
    
    \item For any measurable set $\mathcal{S}$, $|\mathcal{S}|$ denotes the Lebesgue measure of $\mathcal{S}$,  and 
    $\overline{\mathcal{S}}$ denotes the closure of $\mathcal{S}$.

    \item $f|_\mathcal{S}$ denotes the restriction of $f$ to $\mathcal{S}$.
    
    \item $p'$ denotes the conjugate exponent of $p$, i.e.,
    $\frac{1}{p} + \frac{1}{p'} = 1$.

    \item For any domain $\Omega$ and integer $m \ge 1$, 
    $
    C_0^\infty(\Omega;\rX)$ -- the set of all infinitely differentiable $\rX$-valued functions with compact support in $\Omega$; 
    $
    C^m(\Omega;\rX)$ \\ $( \text{or} \ C^m(\overline{\Omega};\rX))
    $ -- the set of all $m$-th order differentiable $\rX$-valued functions on $\Omega$ (or $\overline{\Omega}$). 
        
    \item $\delta_{nk}$ denotes the Kronecker delta.
    
    \item $C$ denotes a positive constant which may change from line to line.
\end{itemize}

\section{Functional Analytic Framework}\label{sec2}

\subsection{Tensor Products, Bochner Spaces, and UMD Spaces}

Let $\rX$ and $\rY$ be Banach spaces. The algebraic \textit{tensor product} $\rX \otimes \rY$ is a vector space equipped with a bilinear map
$$
\otimes : \rX \times \rY \longrightarrow \rX \otimes \rY, \qquad
(x,y) \longmapsto x \otimes y,
$$
satisfying the following \textit{universal property}: for every vector space $\rZ$ and every bilinear map
$$
\tau : \rX \times \rY \longrightarrow \rZ,
$$
there exists a unique linear map
$$
\widetilde{\tau} : \rX \otimes \rY \longrightarrow \rZ
$$
such that
$$
\widetilde{\tau}(x \otimes y) = \tau(x,y),
\qquad x \in \rX,\quad y \in \rY.
$$

Moreover, the algebraic tensor product $\rX \otimes \rY$ is the vector space generated by the elementary tensors $x \otimes y$, with $x \in \rX$ and $y \in \rY$. Thus, every element $u \in \rX \otimes \rY$ can be written as
$$
u = \sum_{i=1}^{m} x_i \otimes y_i,
\qquad m \in \mathbb{N},
$$
for some $x_i \in \rX$ and $y_i \in \rY$.

Let $(\mathcal{S},\mathcal{A},\mu)$ be a measure space, where $\mathcal{A} \subseteq 2^{\mathcal{S}}$ is a $\sigma$-algebra and $\mu$ is a measure on $\mathcal{A}$. An $\rX$-valued function $f : \mathcal{S} \to \rX$ is called \textit{$\mu$-simple} if it can be represented in the form
$$
f = \sum_{k=1}^{n} \chi_{A_k} x_k,
\qquad n \in \mathbb{N},
$$
where, for each $1 \leq k \leq n$, $x_k \in \rX$, $A_k \in \mathcal{A}$, $\mu(A_k) < \infty$, and the sets $A_k$ are pairwise disjoint. Here, $\chi_{A_k}$ denotes the indicator function of the set $A_k$.

For an $\rX$-valued $\mu$-simple function $f$ as above, we define its
Bochner integral by
\begin{equation*}
\int_{\mathcal{S}} f\,d\mu := \sum_{k=1}^n\mu(A_k)x_k.
\end{equation*}

A function $f:\mathcal{S}\to\rX$ is said to be
\textit{strongly $\mu$-measurable} if there exists a sequence
$\{f_j\}_{j\in\mathbb{N}}$ of $\rX$-valued $\mu$-simple functions such that
$$
f_j(s)\to f(s)
\qquad\text{for $\mu$-almost every }s\in\mathcal{S}.
$$

The \textit{Bochner integral} of $f$ with respect to $\mu$ is defined by
$$
\int_{\mathcal{S}} f\,d\mu
:=\lim_{j\to\infty}
\int_{\mathcal{S}} f_j\,d\mu,
$$
where the limit is taken in $\rX$.

For simplicity, we identify the elementary tensor $f\otimes x$ with
the notation
\begin{equation}\label{eq-product}
xf:=f\otimes x,
\qquad x\in\rX,\quad f\in L^p(\mathcal{S}).
\end{equation}

The following classical result ensures the density of $\rX$-valued
simple functions in a Bochner space; see, e.g., \cite{DU,HNVW}.

\begin{lem}\label{lem:simple-density}
The set of $\mu$-simple $\rX$-valued functions is dense in
$L^p(\mathcal{S};\rX)$ for $1\leq p<\infty$.
In particular,
$$
\overline{L^p(\mathcal{S})\otimes\rX} =
L^p(\mathcal{S};\rX),
$$
where the closure is taken with respect to the Bochner norm.
\end{lem}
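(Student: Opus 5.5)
The plan is to prove density of simple functions directly from the definition of strong measurability, and then read off the tensor-product statement. Let $f\in L^p(\mathcal{S};\rX)$ with $1\le p<\infty$. By definition $f$ is strongly $\mu$-measurable, so there are $\mu$-simple functions $g_j$ with $g_j(s)\to f(s)$ for $\mu$-a.e. $s$. These $g_j$ need not be dominated by $\|f(\cdot)\|_{\rX}$, so the first step is to truncate them. Set
$$
f_j(s):=\begin{cases} g_j(s), & \|g_j(s)\|_{\rX}\le 2\|f(s)\|_{\rX},\\ 0, & \text{otherwise}.\end{cases}
$$
The set $\{\|g_j\|_{\rX}\le 2\|f\|_{\rX}\}$ is measurable, because $\|f(\cdot)\|_{\rX}$ is the a.e. limit of the measurable functions $\|g_j(\cdot)\|_{\rX}$. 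Hence $f_j$ is again of the form $\sum_k \chi_{B_k}x_k$ with disjoint measurable $B_k$.

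Next I check the properties of $f_j$.
\begin{itemize}
\item \emph{Convergence.} Wherever $f(s)\neq 0$, eventually $\|g_j(s)\|_{\rX}<2\|f(s)\|_{\rX}$, so $f_j(s)=g_j(s)\to f(s)$. Wherever $f(s)=0$, we have $\|f_j(s)\|_{\rX}\le 2\|f(s)\|_{\rX}=0$. Thus $f_j\to f$ a.e.
\item \emph{Domination.} By construction $\|f_j(s)\|_{\rX}\le 2\|f(s)\|_{\rX}$ pointwise.
\item \emph{Simplicity.} The sets $B_k$ on which $f_j$ takes a nonzero value $x_k$ have finite measure. This follows from
$$
\mu(B_k)\,\|x_k\|_{\rX}^p\le 2^p\int_{\mathcal{S}}\|f\|_{\rX}^p\,d\mu<\infty,
$$
which uses $p<\infty$. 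So $f_j$ is $\mu$-simple.
\end{itemize}

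For the convergence in norm, observe that $\|f_j-f\|_{\rX}^p\to 0$ a.e. and $\|f_j-f\|_{\rX}^p\le 3^p\|f\|_{\rX}^p\in L^1(\mu)$. The scalar dominated convergence theorem then gives $\|f_j-f\|_{L^p(\mathcal{S};\rX)}\to 0$, which proves the density claim.

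For the ``in particular'' part, each $\mu$-simple function $\sum_k\chi_{A_k}x_k$ equals $\sum_k x_k\chi_{A_k}$ in the notation \eqref{eq-product}. Since $\chi_{A_k}\in L^p(\mathcal{S})$, every simple function lies in $L^p(\mathcal{S})\otimes\rX$. Conversely, every elementary tensor $xf$ with $f\in L^p(\mathcal{S})$ belongs to $L^p(\mathcal{S};\rX)$, with $\|xf\|=\|x\|_{\rX}\|f\|_{L^p}$. Hence
$$
\text{simple functions}\subset L^p(\mathcal{S})\otimes\rX\subset L^p(\mathcal{S};\rX),
$$
and taking closures yields equality. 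The main obstacle is the truncation step, specifically the finite-measure requirement and the measurability of the truncation sets; the rest is routine.
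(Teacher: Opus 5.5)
Your argument is correct and is essentially the standard textbook proof (Diestel--Uhl, Hyt\"onen--van Neerven--Veraar--Weis) to which the paper defers without giving a proof of its own: truncate the approximating $\mu$-simple functions at level $2\|f\|_{\rX}$, apply scalar dominated convergence, and note that simple functions already lie in $L^p(\mathcal{S})\otimes\rX$. Two minor points. First, the finite-measure requirement is automatic, because $f_j=\chi_{E_j}g_j$ with $E_j=\{\|g_j\|_{\rX}\le 2\|f\|_{\rX}\}$ is supported inside the finite-measure support of $g_j$, so this is not really the main obstacle. Second, if $\mu$ is not complete, you should first discard the null set where $g_j\not\to f$, so that $E_j$ genuinely belongs to $\mathcal{A}$.
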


We also recall the notion of UMD spaces; see, e.g., \cite{HNVW,W}.

\begin{defin}
A Banach space $\rX$ is said to have the
\textit{unconditional martingale difference} (UMD) property if, for every
$p\in(1,\infty)$, there exists a constant
$\beta_{p,\rX}\geq 1$ such that, for every $\sigma$-finite measure space
$(\mathcal{S},\mathcal{A},\mu)$, every finite filtration
$$
\mathcal{F}_0\subseteq\mathcal{F}_1\subseteq\cdots\subseteq
\mathcal{F}_n\subseteq\mathcal{A},
$$
and every finite $\rX$-valued martingale
$\{f_k\}_{k=0}^n$ in $L^p(\mathcal{S};\rX)$ adapted to this filtration, one has
$$
\left\|
\sum_{k=1}^n \xi_k\,df_k
\right\|_{L^p(\mathcal{S};\rX)} \leq \beta_{p,\rX}
\left\| \sum_{k=1}^n\, df_k \right\|_{L^p(\mathcal{S};\rX)}
$$
for every choice of scalars $\xi_1,\ldots,\xi_n$ satisfying $|\xi_k|=1$, where
$$
df_k=f_k-f_{k-1}, \qquad k=1,\ldots,n.
$$
\end{defin}

\subsection{Completeness, biorthogonality, and basis}
\label{sub2.2}

Let $\vec{f}=\{f_k\}_{k\in\mathbb{N}}$ be a system of functions in $L^p(\mathcal{S})$. 
The $\otimes$-\textit{span} of $\vec f$ is defined by
$$
L_\otimes[\vec f] =
\left\{ g\in L^p(\mathcal{S};\rX):
g=\sum_{k\in F}x_kf_k
\text{ for some finite } F\subset\mathbb{N}
\text{ and } x_k\in\rX
\right\}.
$$

\begin{defin}\label{complete}
A system
$\vec f=\{f_k\}_{k\in\mathbb{N}}\subset L^p(\mathcal{S})$
is said to be \textit{$\otimes$-complete} in
$L^p(\mathcal{S};\rX)$ if
$$
\overline{L_\otimes[\vec f]} =
L^p(\mathcal{S};\rX),
$$
where the closure is taken with respect to the Bochner norm.
\end{defin}

\begin{defin}
A system of bounded linear operators
$$
\{t_n\}_{n\in\mathbb N} \subset [L^p(\mathcal{S};\rX);\rX]
$$
is said to be \textit{$\otimes$-biorthonormal} to $\vec f$ if
\begin{equation}\label{biortog}
t_n(xf_k)=\delta_{nk}x,
\qquad
\forall\,x\in\rX,\quad \forall\ n,k\in\mathbb N.
\end{equation}
\end{defin}

The following result relates the notions introduced above and extends
some previous results obtained in \cite{BHS,B4,B8} to the framework of Bochner and UMD spaces.
\begin{lem}\label{L1}
Every $\otimes$-complete system
$\vec{f}=\{f_k\}_{k\in\N}\subset L^p(\mathcal{S})$
admits at most one $\otimes$-biorthonormal  system.
\end{lem}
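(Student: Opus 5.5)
The plan is the standard uniqueness argument via density: two bounded operators that agree on a dense subspace must coincide. Suppose $\{t_n\}_{n\in\N}$ and $\{s_n\}_{n\in\N}$ in $[L^p(\mathcal{S};\rX);\rX]$ are both $\otimes$-biorthonormal to the $\otimes$-complete system $\vec f$. Fix $n\in\N$ and set $d_n:=t_n-s_n$. This is again a bounded linear operator from $L^p(\mathcal{S};\rX)$ to $\rX$, since $[L^p(\mathcal{S};\rX);\rX]$ is a vector space. The goal is to show $d_n=0$.

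First, apply \eqref{biortog} to both systems. For every $x\in\rX$ and $k\in\N$,
$$
d_n(xf_k)=t_n(xf_k)-s_n(xf_k)=\delta_{nk}x-\delta_{nk}x=0 .
$$
Next, extend this to the whole $\otimes$-span by linearity. Any $g\in L_\otimes[\vec f]$ has the form $g=\sum_{k\in F}x_kf_k$ with $F$ finite, so
$$
d_n(g)=\sum_{k\in F}d_n(x_kf_k)=0 .
$$
Here one uses that the identification \eqref{eq-product}, $xf=f\otimes x$, realizes $x_kf_k$ as an element of $L^p(\mathcal{S};\rX)$ (namely $s\mapsto f_k(s)x_k$), so that the sum is an honest linear combination in the domain of $d_n$. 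Hence $d_n$ vanishes on $L_\otimes[\vec f]$.

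Finally, use boundedness and density. Let $g\in L^p(\mathcal{S};\rX)$ be arbitrary. By Definition~\ref{complete} there are $g_j\in L_\otimes[\vec f]$ with $g_j\to g$ in the Bochner norm. Then
$$
\|d_n(g)\|_{\rX}=\|d_n(g-g_j)\|_{\rX}\le \|d_n\|\,\|g-g_j\|_{L^p(\mathcal{S};\rX)}\to 0 .
$$
So $d_n(g)=0$, which gives $t_n=s_n$ for every $n$, and the $\otimes$-biorthonormal system is unique.

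No step is a genuine obstacle. The only point needing care is the density step: it is essential that the $t_n$ are \emph{bounded} (continuous), which is built into the definition through $[L^p(\mathcal{S};\rX);\rX]$. Unlike the scalar case, no Hahn--Banach or duality argument is needed; completeness is used directly in its defining form as norm density of $L_\otimes[\vec f]$.
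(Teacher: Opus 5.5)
Your proof is correct and follows the same route as the paper: take the difference of the two biorthonormal systems, note that it annihilates each $xf_k$ and hence, by linearity, all of $L_\otimes[\vec f]$, and then conclude by boundedness and density. The only difference is that the paper writes the last step with an explicit $\varepsilon/(\|t_n\|+1)$ approximation rather than a sequence $g_j\to g$, which is purely cosmetic.
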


\begin{proof}
Assume that $\vec{f}$ admits two $\otimes$-biorthonormal  systems
$$
\{t_n^i\}_{n\in\N}
\subset [L^p(\mathcal{S};\rX);\rX], \qquad i=1,2,
$$
and set $ t_n:=t_n^1-t_n^2.$ 
Then, for every $x\in\rX$ and every $n,k\in\N$,
\begin{equation}\label{eq-tnk}
t_n(xf_k) =
t_n^1(xf_k)-t_n^2(xf_k) =
\delta_{nk}x-\delta_{nk}x = 0.
\end{equation}

Let $g\in L^p(\mathcal{S};\rX)$ be arbitrary. Since $\vec{f}$ is $\otimes$-complete, for every $\varepsilon>0$ there exist a finite set $F\subset \mathbb{N}$  and vectors $x_k\in \rX,$ $k\in F,$ such that 
$$
\left\| g-\sum_{k\in F}\, x_kf_k
\right\|_{L^p(\mathcal{S};\rX)} <
\frac{\varepsilon}  {\|t_n\|_{[L^p(\mathcal{S};\rX);\rX]}+1}.
$$
By the linearity of $t_n$ and \eqref{eq-tnk}
we have 
$$
t_n\left(\sum_{k\in F}x_kf_k\right) = \sum_{k\in F}t_n(x_kf_k) = 0. 
$$
Consequently,
\begin{align*}
\|t_n(g)\|_{\rX}
&=\left\|
t_n\left( g-\sum_{k\in F}\, x_kf_k \right) \right\|_{\rX}\\
&\leq \|t_n\|_{[L^p(\mathcal{S};\rX);\rX]}
\left\| g-\sum_{k\in F}\, x_kf_k
\right\|_{L^p(\mathcal{S};\rX)}\\
&< \frac{\|t_n\|_{[L^p(\mathcal{S};\rX);\rX]}}
{\|t_n\|_{[L^p(\mathcal{S};\rX);\rX]}+1}
\,\varepsilon
< \varepsilon.
\end{align*}
Since $\varepsilon>0$ is arbitrary, it follows that
$  t_n(g)=0.$ 
As $g\in L^p(\mathcal{S};\rX)$ was arbitrary, we conclude that
$  t_n\equiv0.$ 
Hence
$$
t_n^1=t_n^2 \qquad\text{for all }n\in\N,
$$
which proves the uniqueness.
\end{proof}

\begin{defin}\label{basis}
A system $\vec{f}=\{f_k\}_{k\in\N}\subset L^p(\mathcal{S})$ is said to
form a \textit{$\otimes$-basis} for $L^p(\mathcal{S};\rX)$ if, for every
$g\in L^p(\mathcal{S};\rX)$, there exists a unique sequence
$\{x_k\}_{k\in\N}\subset\rX$ such that
\[
g=\sum_{k=1}^{\infty}x_kf_k,
\]
where the series converges in $L^p(\mathcal{S};\rX)$.

Equivalently, every $g\in L^p(\mathcal{S};\rX)$ admits a unique
representation
\[
g=\sum_{k=1}^{\infty}f_k\otimes x_k,
\]
where the series converges in $L^p(\mathcal{S};\rX)$.
\end{defin}

The following criterion is analogous to the classical characterization
of a Schauder basis.

\begin{thm}\label{T1}
The system $\vec{f}=\{f_k\}_{k\in\N}\subset L^p(\mathcal{S})$ forms a
$\otimes$-basis for $L^p(\mathcal{S};\rX)$ if and only if the following
conditions are satisfied:
\begin{enumerate}
\item $\vec{f}$ is $\otimes$-complete in $L^p(\mathcal{S};\rX)$;

\item $\vec{f}$ admits a $\otimes$-biorthonormal  system
$$
\{t_n\}_{n\in\N} \subset [L^p(\mathcal{S};\rX);\rX]
$$
satisfying \eqref{biortog};

\item the sequence of projection operators
$$
\{P_m\}_{m\in\N}
\subset [L^p(\mathcal{S};\rX)]
$$
defined by
$$
P_m(g)
=\sum_{k=1}^{m}\, t_k(g)f_k,
\qquad
g\in L^p(\mathcal{S};\rX),
$$
is uniformly bounded, that is,
$$
\sup_{m\in\N}
\|P_m\|_{[L^p(\mathcal{S};\rX)]}<\infty.
$$
\end{enumerate}
\end{thm}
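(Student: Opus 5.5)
The proof follows the classical Schauder basis argument, with scalars replaced by vectors of $\rX$ and functionals by operators in $[L^p(\mathcal{S};\rX);\rX]$.

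\emph{Sufficiency.} Assume (1)--(3) and set $M:=\sup_m\|P_m\|$. First, for every finite sum $h=\sum_{k\in F}x_kf_k$, the biorthonormality \eqref{biortog} and linearity give $t_k(h)=x_k$ for $k\in F$ and $t_k(h)=0$ otherwise. Hence $P_m h=h$ as soon as $m\ge\max F$. Now let $g\in L^p(\mathcal{S};\rX)$ and $\varepsilon>0$. By (1) there is such an $h$ with $\|g-h\|<\varepsilon$. Then, for $m\ge\max F$,
$$\|g-P_mg\|\le\|g-h\|+\|P_m(h-g)\|\le(1+M)\varepsilon .$$
So $P_mg\to g$, that is, $g=\sum_k t_k(g)f_k$ with the series convergent in $L^p(\mathcal{S};\rX)$.

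For uniqueness, suppose $g=\sum_k y_kf_k$ converges. Apply the bounded operator $t_n$ to the partial sums and use \eqref{biortog}; this gives $t_n(g)=y_n$. Thus the coefficients are forced, and $\vec f$ is a $\otimes$-basis.

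\emph{Necessity.} Assume $\vec f$ is a $\otimes$-basis. Condition (1) is immediate, since the partial sums lie in $L_\otimes[\vec f]$. We may assume each $f_k\ne0$; otherwise uniqueness fails for the coefficient at $k$.

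Define the coordinate maps $t_n(g):=x_n$, where $g=\sum x_kf_k$ is the unique expansion. Uniqueness makes each $t_n$ linear, and it gives $t_n(xf_k)=\delta_{nk}x$. The only real work is to show that the $t_n$ are bounded; this is the main obstacle, exactly as in the scalar case. I plan to use the Banach space argument. Let
$$\mathcal{E}:=\Bigl\{\{x_k\}\subset\rX:\ \sum x_kf_k \text{ converges}\Bigr\},\qquad \|\{x_k\}\|_{\mathcal{E}}:=\sup_m\Bigl\|\sum_{k\le m}x_kf_k\Bigr\|_{L^p(\mathcal{S};\rX)} .$$
I will check that $\mathcal{E}$ is complete. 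Given a Cauchy sequence in $\mathcal{E}$, the partial-sum maps converge uniformly in $m$. Coordinates are controlled because
$$\|x_kf_k\|\le2\|\{x_j\}\|_{\mathcal{E}} \quad\text{and}\quad \|xf_k\|_{L^p(\mathcal{S};\rX)}=\|x\|_{\rX}\|f_k\|_{L^p(\mathcal{S})} ,$$
so each coordinate sequence converges in $\rX$ to some $x_k$. A standard $\varepsilon/3$ argument then shows that $\sum x_kf_k$ converges and that the limit sequence is attained in $\mathcal{E}$.

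The map $T:\mathcal{E}\to L^p(\mathcal{S};\rX)$, $\{x_k\}\mapsto\sum x_kf_k$, is linear and bounded with $\|T\|\le1$. It is bijective by the basis property, so $T^{-1}$ is bounded by the open mapping theorem. Consequently,
$$\|P_mg\|\le\|T^{-1}g\|_{\mathcal{E}}\le\|T^{-1}\|\,\|g\| ,$$
which is (3). It also gives $\|t_n(g)\|_{\rX}\|f_n\|\le2\|T^{-1}\|\,\|g\|$, so $t_n\in[L^p(\mathcal{S};\rX);\rX]$, which is (2). By Lemma \ref{L1}, this biorthonormal system is the unique one.
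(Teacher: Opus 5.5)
Your proof is correct and follows the same route the paper takes: the paper only invokes the standard Schauder-basis characterization together with Lemma~\ref{L1}, and you have written out that argument in full (density plus uniform boundedness of $P_m$ for sufficiency, and the sequence space $\mathcal{E}$ with the open mapping theorem for necessity). One small correction: $f_k\neq 0$ is not something you may assume but a consequence of the uniqueness of the expansion (for $\rX\neq\{0\}$), and it is exactly what makes $\|\cdot\|_{\mathcal{E}}$ a norm.
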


The proof follows the standard characterization of Schauder bases, together with Lemma~\ref{L1}.

\subsection{Bochner--Sobolev spaces and trace operators}

Let $\Omega\subset\mathbb{R}^n$, $n\geq1$, be an open connected set.  The \textit{Bochner--Sobolev space}
$W_p^m(\Omega;\rX)$, $1\leq p<\infty$, $m\in\mathbb{N}$, is defined by
$$
W_p^m(\Omega;\rX)
= \left\{
u\in L^p(\Omega;\rX):
\partial^\alpha u\in L^p(\Omega;\rX)
\text{ for all } \ |\alpha|\leq m
\right\},
$$
where $\partial^\alpha u$ denotes the weak derivative of $u$ of order $|\alpha|$. 
It is a Banach space equipped with the norm
$$
\|u\|_{W_p^m(\Omega;\rX)} =
\sum_{|\alpha|\leq m}
\|\partial^\alpha u\|_{L^p(\Omega;\rX)},
$$
called the \textit{Bochner--Sobolev norm}.

A function $f:\Omega\to\rX$ is said to be
\textit{locally Bochner integrable}, denoted by
$f\in L^1_{\mathrm{loc}}(\Omega;\rX)$, if
$ f\in L^1(K;\rX)$ 
for every compact set $K\subset\Omega$.

We also recall the following Bochner-valued version of the fundamental
theorem of calculus; see \cite[Lemma 2.5.8]{HNVW}.

\begin{lem}
Let $g\in L^1_{\loc}(\mathbb{R};\rX)$ and let $a\in\mathbb{R}$. Define
$f:\mathbb{R}\to\rX$ by
$$
f(x)=\int_a^x g(s)\,ds, \qquad x\in\mathbb{R},
$$
where the integral is understood in the Bochner sense.
Then $f\in W^1_{1,\loc}(\mathbb{R};\rX)$ and
$$
\partial f=f'=g
\qquad\text{ for almost every } x\in \mathbb{R}.
$$
\end{lem}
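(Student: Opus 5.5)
The plan is to reduce the statement to its scalar counterpart by approximating $g$ with Lipschitz or smooth functions and then passing to the limit. First I treat $g\in C_c(\mathbb{R};\rX)$. The Bochner integral $f(x)=\int_a^x g(s)\,ds$ is defined, and continuity of $g$ gives
$$
\Big\|\frac{f(x+h)-f(x)}{h}-g(x)\Big\|_{\rX}\le \sup_{|s-x|\le |h|}\|g(s)-g(x)\|_{\rX}\to 0 .
$$
Hence $f\in C^1(\mathbb{R};\rX)$ with $f'=g$ pointwise. To identify this classical derivative with the weak one, I take $\varphi\in C_0^\infty(\mathbb{R})$ and apply the product rule to $\varphi f$, which is $C^1$ with compact support. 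The Bochner fundamental theorem for continuous integrands then gives
$$
\int_{\mathbb{R}} \varphi' f\,dx=-\int_{\mathbb{R}}\varphi g\,dx .
$$
That fundamental theorem follows from the scalar one after applying functionals $x^*\in\rX^*$ and using Hahn--Banach.

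Next I take a general $g\in L^1_{\loc}(\mathbb{R};\rX)$ and fix a bounded interval $K\ni a$. By Lemma~\ref{lem:simple-density} with $p=1$, together with approximation of indicators of sets of finite measure by continuous functions, there are $g_j\in C_c(\mathbb{R};\rX)$ with $g_j\to g$ in $L^1(K;\rX)$. Setting $f_j(x)=\int_a^x g_j\,ds$, I get
$$
\sup_{x\in K}\|f_j(x)-f(x)\|_{\rX}\le \|g_j-g\|_{L^1(K;\rX)}\to 0 .
$$
So $f$ is continuous, in particular strongly measurable, and lies in $L^1(K;\rX)$. For $\varphi$ supported in $K$, the weak identity $\int \varphi' f_j=-\int\varphi g_j$ passes to the limit, because the Bochner integral is continuous with respect to $L^1$ convergence. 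Thus $\partial f=g$ weakly on every such $K$, and therefore $f\in W^1_{1,\loc}(\mathbb{R};\rX)$.

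It remains to prove the pointwise statement $f'(x)=g(x)$ for a.e.\ $x$. I will use the Lebesgue differentiation theorem in the Bochner setting:
$$
\frac1h\int_x^{x+h}\|g(s)-g(x)\|_{\rX}\,ds\to0 \quad\text{for a.e. } x .
$$
This would immediately give $\|(f(x+h)-f(x))/h-g(x)\|_{\rX}\to0$. To prove it, I reduce to the separable case via Pettis' theorem, so $g$ is a.e.\ separably valued, and fix a countable dense set $\{y_m\}$ in the range. For each $m$, the scalar Lebesgue theorem applied to $\|g(\cdot)-y_m\|_{\rX}$ gives a common full-measure set of Lebesgue points. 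At such a point the triangle inequality yields
$$
\limsup_{h\to 0}\frac1h\int_x^{x+h}\|g(s)-g(x)\|_{\rX}\,ds\le 2\|g(x)-y_m\|_{\rX}
$$
for every $m$, and the right side can be made arbitrarily small by density.

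I expect this last step, the vector-valued Lebesgue differentiation, to be the main obstacle, since it is the only place where strong measurability is essential. The rest is a direct transfer of the scalar argument through duality and density.
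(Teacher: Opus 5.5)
Your proof is correct. There is no in-paper argument to compare it with: the paper does not prove this lemma, it only quotes it from \cite[Lemma 2.5.8]{HNVW}.

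Compared with the usual textbook proof, your route to the weak identity is longer than it needs to be. The identity $\int_{\mathbb{R}}\varphi' f\,dx=-\int_{\mathbb{R}}\varphi g\,dx$ can be obtained directly for every $g\in L^1_{\loc}(\mathbb{R};\rX)$, in either of two ways:
\begin{itemize}
\item insert $f(x)=\int_a^x g(s)\,ds$ and interchange the integrals (Fubini with the scalar kernel $\varphi'$, legitimate because $\varphi$ has compact support);
\item use the same duality argument you already use for continuous $g$: $(x^*\circ f)(x)=\int_a^x x^*(g(s))\,ds$, so the scalar theorem gives the identity after applying any $x^*\in\rX^*$, and Hahn--Banach removes $x^*$.
\end{itemize}
Either way, all you need is that $f$ is strongly measurable and locally integrable. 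This follows at once from $\|f(x)-f(y)\|_{\rX}\le\bigl|\int_y^x\|g(s)\|_{\rX}\,ds\bigr|$. So your approximation by $C_c(\mathbb{R};\rX)$ functions and the uniform convergence $f_j\to f$ on $K$ are valid but dispensable.

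Your treatment of the pointwise statement is complete as written. You derive the vector-valued Lebesgue differentiation theorem from the scalar one, applied to $\|g(\cdot)-y_m\|_{\rX}$ for a countable dense set $\{y_m\}$ in a separable subspace carrying the essential range of $g$. This is the classical argument. What your version buys is self-containedness: it does not treat the vector-valued differentiation theorem as a black box.
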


We say that a domain $\Omega\subset\mathbb{R}^n$, $n\geq2$, satisfies the
\textit{segment condition} if, for every $z\in\partial\Omega$, there exist
an open neighbourhood $U_z$ of $z$ and a vector
$h_z\in\mathbb{R}^n\setminus\{0\},$ depending on $z$, such that
$$
\xi+th_z\in\Omega,
\qquad
\forall\,\xi\in U_z\cap\overline{\Omega},
\quad \forall\,t\in(0,1).
$$

The following result gives a Bochner-valued extension of the classical
density result for Sobolev spaces; see \cite{HNVW}. 
In particular, the
segment condition is satisfied by domains with Lipschitz boundary.

\begin{prp}
Let $\Omega$ satisfy the segment condition, let $m\in\mathbb{N}$, and let
$1\leq p<\infty$. Then
$$
\left\{f|_\Omega: \ f\in C_0^\infty(\mathbb{R}^n;\rX)
\right\}
$$
is dense in $W_p^m(\Omega;\rX).$ 
\end{prp}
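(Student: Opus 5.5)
The plan is the classical Meyers--Serrin / segment-condition argument from the scalar case, carried out with Bochner integrals. Every step uses only the linearity of convolution and Minkowski's inequality in $\rX$, so no geometric property of $\rX$ (such as UMD) should be needed. There are three stages: truncation at infinity, localization by a partition of unity, and translation followed by mollification near the boundary.

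\emph{Truncation.} Fix $u\in W_p^m(\Omega;\rX)$ and $\varepsilon>0$. Take a cutoff $\eta_R(x)=\eta(x/R)$ with $\eta\in C_0^\infty(\mathbb{R}^n)$, $\eta=1$ on $B_1$ and $\eta=0$ outside $B_2$. By the Leibniz rule for weak derivatives, which holds verbatim for $\rX$-valued functions because it is tested against scalar test functions, $\partial^\alpha(\eta_R u)-\partial^\alpha u$ splits into two kinds of terms. The terms where derivatives fall on $\eta_R$ carry factors $R^{-|\beta|}$. The remaining term is $(\eta_R-1)\partial^\alpha u$. Dominated convergence applied to $\|\partial^\alpha u\|_{\rX}^p$ sends both to zero as $R\to\infty$, so we may assume $u$ has bounded support.

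\emph{Localization.} The set $\overline{\Omega}\cap\operatorname{supp}u$ is compact. Cover it by finitely many pieces: an open $U_0\Subset\Omega$, together with the neighbourhoods $U_{z_1},\dots,U_{z_N}$ from the segment condition, shrunk slightly to $U_j'\Subset U_{z_j}$. Let $\{\psi_j\}_{j=0}^N$ be a smooth partition of unity subordinate to this cover, and write $u=\sum_j\psi_j u$. Each $\psi_j u$ lies in $W_p^m(\Omega;\rX)$, again by the Leibniz rule. It therefore suffices to approximate each piece $u_j:=\psi_j u$ separately.

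\emph{The interior piece.} For $u_0$ take the mollification $\rho_\delta*u_0$, the convolution being a Bochner integral. This function lies in $C_0^\infty(\Omega;\rX)$ for small $\delta$. We have $\partial^\alpha(\rho_\delta*u_0)=\rho_\delta*\partial^\alpha u_0$, which one checks by pairing against test functions and using Fubini for Bochner integrals. Moreover $\rho_\delta*v\to v$ in $L^p(\mathbb{R}^n;\rX)$. This last fact is proved by reducing, via Lemma~\ref{lem:simple-density}, to simple functions $\sum\chi_{A_k}x_k$, where it is the scalar statement, and using Young's inequality $\|\rho_\delta*v\|_{L^p(\rX)}\le\|v\|_{L^p(\rX)}$. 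Young's inequality follows from Minkowski's integral inequality for the norm.

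\emph{The boundary pieces (main obstacle).} For $j\ge1$, extend $u_j$ by zero outside $\Omega$ and set $h=h_{z_j}$. Define the inward translate $u_{j,t}(x)=u_j(x+th)$ for small $t>0$. By the segment condition, for $x\in U_j'\cap\overline\Omega$ the point $x+th$ lies in $\Omega$, at distance from $\partial\Omega$ bounded below by some $d(t)>0$ on the compact support. Hence mollifying $u_{j,t}$ with $\delta<d(t)$ yields a function $f\in C_0^\infty(\mathbb{R}^n;\rX)$. On $\Omega$, this function uses only values of $u_j$ taken inside $\Omega$, so its derivatives there equal $\rho_\delta*(\partial^\alpha u_j)(\cdot+th)$.

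The main point to verify is continuity of translation in $L^p(\Omega;\rX)$, namely $\|v(\cdot+th)-v\|_{L^p}\to0$ as $t\to0$. This again follows from density of simple functions, where it reduces to the scalar statement for $\chi_A$, combined with uniform boundedness of the translation operators. Choosing first $t$ and then $\delta$ small gives $\|f|_\Omega-u_j\|_{W_p^m(\Omega;\rX)}<\varepsilon/(N+1)$. Summing over $j$ completes the proof.
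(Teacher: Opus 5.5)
Your proposal is correct. The paper does not prove this proposition; it only refers to the literature (HNVW) for the Bochner-valued version of the classical density theorem, so there is no competing argument to compare against.

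What you wrote is the classical segment-condition proof: truncation, a finite partition of unity, and inward translation followed by mollification. Each scalar ingredient is replaced by its Bochner analogue:
\begin{itemize}
\item the Leibniz rule and the identity $\partial^\alpha(\rho_\delta*v)=\rho_\delta*\partial^\alpha v$, both checked against scalar test functions;
\item Young's inequality, obtained from Minkowski's inequality for the norm;
\item continuity of mollification and of translation, obtained from the density of simple functions (Lemma~\ref{lem:simple-density}).
\end{itemize}
Compared with the bare citation, your argument makes explicit that no geometric property of $\rX$ enters, in particular no UMD assumption. This is consistent with the proposition being stated for an arbitrary Banach space.

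One sentence should be made precise. For $x\in\Omega$ far from $\operatorname{supp}\psi_j$, the points $x+th+z$ with $|z|<\delta$ need not lie in $\Omega$. So the phrase \emph{uses only values of $u_j$ taken inside $\Omega$} is not literally what you need. The statement that actually does the work has two parts:
\begin{itemize}
\item the zero extension of $u_j$ is weakly differentiable on $\R^n\setminus(\partial\Omega\cap\operatorname{supp}\psi_j)$, with derivatives equal to the zero extensions of $\partial^\alpha u_j$;
\item the set $\Omega+th+B_\delta$ does not meet the compact set $\partial\Omega\cap\operatorname{supp}\psi_j$.
\end{itemize}
The second part holds under three conditions: $0<t<1$; $\delta<d(t)$, where $d(t)$ is the distance from the compact set $(\overline{U_j'}\cap\overline\Omega)+th$ to $\R^n\setminus\Omega$; and the extra requirement $t|h|+\delta<\operatorname{dist}(\operatorname{supp}\psi_j,\R^n\setminus U_j')$. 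Under these conditions, a point $x+th+z\in\operatorname{supp}\psi_j$ with $x\in\Omega$ forces $x\in U_j'$, and hence $x+th+z\in\Omega$. In particular it cannot lie on $\partial\Omega$. With this in place, the derivative identity on $\Omega$ and your choice of first $t$, then $\delta$, go through as written.
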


Let $S\subset\partial\Omega$ be an $(n-1)$-dimensional surface. We denote
by $C^\infty_{0;S}(\Omega;\rX)$ the set of all infinitely differentiable
$\rX$-valued functions on $\Omega$ that vanish in a neighborhood of $S$, that is,
$$
C^\infty_{0;S}(\Omega;\rX)
=
\left\{
u\in C^\infty(\Omega;\rX):
\exists \ U\subset\mathbb{R}^n
\text{ with } S\subset U
\text{ such that }
u|_{U\cap\Omega}=0 \right\}.
$$

The closure of $C^\infty_{0;S}(\Omega;\rX)$ in
$W^1_p(\Omega;\rX)$ is denoted by
$$
\mathring{W}^{1}_{p;S}(\Omega;\rX)
:= \overline{C^\infty_{0;S}(\Omega;\rX)}^{\,W^1_p(\Omega;\rX)}.
$$

We now introduce the notion of a trace operator and the corresponding
trace space. To this end, we use the following theorem, proved in
\cite{Kr1}.

\begin{thm}\cite[Theorem 4.14]{Kr1}
Let $\Omega\subset\R^n$, $n\geq2$, be a  domain with uniform Lipschitz
boundary, and let $1\leq p<\infty$. Then there exists a linear and continuous  operator
$$
\Gamma\in
[W^1_p(\Omega;\rX);L^p(\partial\Omega;\rX)]
$$
such that
$$
\Gamma u=u|_{\partial\Omega}
\qquad
\forall\,u\in
W^1_p(\Omega;\rX)\cap C(\overline{\Omega};\rX).
$$

Moreover, for every $u\in W^1_p(\Omega;\rX)$,
$$
u\in\mathring W^1_p(\Omega;\rX) \quad\Longleftrightarrow\quad
\Gamma u=0.
$$
\end{thm}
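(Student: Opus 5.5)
The plan is to prove the trace theorem in the standard way: first establish an a priori estimate on smooth functions, then extend by density, and finally characterise the kernel of the trace.

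\emph{Local estimate.} By the density result for domains with the segment condition (uniform Lipschitz domains satisfy it), the set $\{f|_\Omega: f\in C_0^\infty(\R^n;\rX)\}$ is dense in $W^1_p(\Omega;\rX)$. For such $u$ we aim for the estimate
\begin{equation*}
\|u|_{\partial\Omega}\|_{L^p(\partial\Omega;\rX)}\le C\|u\|_{W^1_p(\Omega;\rX)},
\end{equation*}
with $C$ depending only on the Lipschitz character of $\Omega$ and on $p$. Using the uniform Lipschitz property, we cover $\partial\Omega$ by a locally finite family of cylinders with uniformly bounded overlap. In each cylinder we straighten the boundary with a bi-Lipschitz map whose constants are uniform, and we use a subordinate partition of unity with uniformly bounded gradients. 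This reduces the problem to the half-space case $u\in C_0^1(\overline{\R^n_+};\rX)$.

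\emph{Half-space case.} Here the Bochner-valued fundamental theorem of calculus applies to the scalar function $t\mapsto\|u(x',t)\|_\rX^p$ along normal segments. Since $\|\cdot\|_\rX$ is Lipschitz, $\|u\|_\rX^p$ is absolutely continuous with derivative bounded by $p\|u\|_\rX^{p-1}\|\partial_t u\|_\rX$. This gives
\begin{equation*}
\|u(x',0)\|_\rX^p\le p\int_0^\infty \|u\|_\rX^{p-1}\|\partial_tu\|_\rX\,dt ,
\end{equation*}
and Young's inequality then yields the bound. No UMD property is needed.

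\emph{Extension to $W^1_p$.} Summing the local estimates over the cover gives the global estimate. We define $\Gamma$ as the unique continuous extension from the dense set. To see that $\Gamma u=u|_{\partial\Omega}$ for $u\in W^1_p\cap C(\overline\Omega;\rX)$, we use mollified approximations that converge locally uniformly up to the boundary as well as in $W^1_p$. For this we choose the approximation by translation along $h_z$ followed by mollification, which preserves uniform convergence for continuous $u$.

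\emph{Kernel characterisation.} The direction ``$u\in\mathring W^1_p\Rightarrow\Gamma u=0$'' is immediate, since $\Gamma$ vanishes on $C_0^\infty(\Omega;\rX)$ and is continuous. The converse is the main obstacle. After localisation and flattening, it suffices to treat $u\in W^1_p(\R^n_+;\rX)$ with compact support and $\Gamma u=0$. The FTC argument, applied now to approximants, gives
\begin{equation*}
\|u(x',t)\|_\rX\le\int_0^t\|\partial_tu(x',s)\|_\rX\,ds .
\end{equation*}
Hence $\int_{\R^{n-1}}\int_0^\varepsilon\|u\|^p_\rX\,dt\,dx'\le C\varepsilon^p\int_{\{0<t<\varepsilon\}}\|\partial_t u\|_\rX^p$. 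Multiplying $u$ by $1-\eta(t/\varepsilon)$, with $\eta$ a cutoff equal to $1$ near $0$, the error term $\varepsilon^{-1}\|u\|_{L^p(0<t<2\varepsilon)}$ is controlled by $\|\partial_tu\|_{L^p(0<t<2\varepsilon)}$, which tends to $0$. A final mollification places the function in $C_0^\infty$, so $u\in\mathring W^1_p$. The delicate point is justifying the pointwise FTC bound for $u$ with zero trace. We handle it by approximating $u$ with smooth $u_j$, noting that $u_j|_{t=0}\to0$ in $L^p$, and passing to the limit in the integrated inequality.
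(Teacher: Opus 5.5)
The paper gives no proof of this statement. It is quoted from \cite[Theorem 4.14]{Kr1}, and the only related argument in the text is the localisation sketch in the proof of Corollary~\ref{cor2.2}, which defers the key step to a ``standard zero-trace approximation result in a Lipschitz half-ball''.

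Your proposal is correct. It is the classical scalar proof carried over to the Bochner setting:
\begin{itemize}
\item an a priori estimate for smooth functions, via the fundamental theorem of calculus along normal lines and Young's inequality;
\item extension of $\Gamma$ by density;
\item for the converse, the Hardy-type bound $\|u\|_{L^p(0<t<\varepsilon)}\le C\varepsilon\|\partial_t u\|_{L^p(0<t<\varepsilon)}$ combined with the cutoff $1-\eta(t/\varepsilon)$.
\end{itemize}
Your converse argument is exactly the step the paper treats as standard. You are right that UMD plays no role. Because you apply the fundamental theorem of calculus only to smooth approximants and pass to the limit in integrated inequalities, you also never need the Radon--Nikod\'ym property, so the argument works for every Banach space $\rX$.

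A few points deserve a sentence each when you write this out.
\begin{itemize}
\item After a general bi-Lipschitz flattening, a smooth $u$ is only Lipschitz, not in $C^1_0(\overline{\R^n_+};\rX)$. It is cleaner to use the shear $(x',t)\mapsto(x',t+\gamma(x'))$. It keeps $t\mapsto u(x',t+\gamma(x'))$ of class $C^1$, has unit Jacobian, and changes surface measure by at most a factor $\sqrt{1+L^2}$, where $L$ is the Lipschitz constant of $\gamma$.
\item The converse needs the transfer of $W^1_p(\cdot\,;\rX)$, of $\mathring W^1_p(\cdot\,;\rX)$ and of the trace under this map. These follow by density from the smooth case. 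The pull-back of a $C_0^\infty(\R^n_+;\rX)$ function is only Lipschitz with compact support in $\Omega$, so it lies in $\mathring W^1_p(\Omega;\rX)$ only after one more mollification in $\Omega$.
\item For unbounded $\Omega$, first cut off at infinity. Then use $\Gamma(\psi u)=\psi\,\Gamma u$, which again follows from density.
\end{itemize}
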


As a consequence, we obtain the following corollary.

\begin{crlr}\label{cor2.2}
Let $\Omega\subset\R^n$, $n\geq2$, be a  domain with
Lipschitz boundary, and let 
$ S\subset\partial\Omega$ be a relatively open
Lipschitz portion of $\partial\Omega$. 
Let $1\leq p<\infty$. Then there
exists a bounded linear operator
$$
\Gamma_S\in [W^1_p(\Omega;\rX);L^p(S;\rX)]
$$
such that
$$
\Gamma_Su=u|_S,
\qquad \forall\ u\in W^1_p(\Omega;\rX)\cap C(\overline{\Omega};\rX).
$$
Moreover,
$$
u\in\mathring W^1_{p;S}(\Omega;\rX)
\quad\Longleftrightarrow\quad
\Gamma_S u=0.
$$
\end{crlr}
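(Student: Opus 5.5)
The corollary will be derived from the full-boundary trace theorem \cite[Theorem 4.14]{Kr1} by restriction. I will define
$$
\Gamma_S u := (\Gamma u)|_S ,\qquad u\in W^1_p(\Omega;\rX),
$$
where $\Gamma\in[W^1_p(\Omega;\rX);L^p(\partial\Omega;\rX)]$ is the operator given by that theorem. Restriction from $\partial\Omega$ to the measurable subset $S$ is linear and has norm at most one from $L^p(\partial\Omega;\rX)$ to $L^p(S;\rX)$. Hence $\Gamma_S$ is bounded with $\|\Gamma_S\|\le\|\Gamma\|$. For $u\in W^1_p(\Omega;\rX)\cap C(\overline\Omega;\rX)$ we have $\Gamma u=u|_{\partial\Omega}$, and therefore $\Gamma_S u=u|_S$. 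This settles the first part.

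For the characterization, the direction ``$\Rightarrow$'' is direct. If $u\in C^\infty_{0;S}(\Omega;\rX)\cap W^1_p$, then $u$ vanishes on $U\cap\Omega$ for some neighbourhood $U\supset S$. By the density proposition (Lipschitz domains satisfy the segment condition), $u$ can be approximated by restrictions of $C_0^\infty(\R^n;\rX)$ functions. Multiplying these approximants by a fixed smooth cutoff $\eta$ that equals $1$ outside $U$ and $0$ near $S$ keeps the convergence, since $\eta u=u$. The resulting approximants are continuous up to the boundary and vanish on $S$, so their traces on $S$ are zero. Continuity of $\Gamma_S$ then gives $\Gamma_Su=0$. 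The same continuity extends $\Gamma_S u=0$ to the whole closure $\mathring W^1_{p;S}(\Omega;\rX)$.

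The direction ``$\Leftarrow$'' is the main obstacle. I will localize with a partition of unity $\{\varphi_j\}$ subordinate to a cover by boundary charts plus an interior piece; for an unbounded domain the cover is taken locally finite. Pieces supported away from $\overline S$ lie in $C^\infty_{0;S}$ after mollification. For a piece $\varphi_ju$ supported near a point of $\overline S$, I flatten the boundary with a bi-Lipschitz map, which preserves $W^1_p(\cdot;\rX)$. This reduces the problem to a half-space function $v$ whose trace vanishes on a relatively open flat set $S'$.

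In the interior of $S'$ I want to apply the second part of \cite[Theorem 4.14]{Kr1}. To do so, I extend $v$ by zero across the part of the boundary where its trace is zero, which stays in $W^1_p$ because the trace vanishes there; this holds by the vector-valued Gauss–Green formula used to prove the cited theorem. I then translate inward and mollify, as in the scalar proof of $\ker\Gamma=\mathring W^1_p$. The translated and mollified functions vanish near $S'$ and converge to $v$ in $W^1_p$.

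Near the relative boundary $\partial_{\partial\Omega}S$ I will be careful. There I expect to use that $S$ is a Lipschitz portion, so that the chart can be chosen with $S'$ a Lipschitz subgraph in the flat boundary. A translation in a direction transversal to $\partial S'$ and into $\Omega$ then pushes the support away from $S$. Summing the localized approximations finishes the argument. The whole construction involves only the norm of $\rX$, so no UMD assumption is required.
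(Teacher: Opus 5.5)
\textbf{Overall.} Your definition of $\Gamma_S$, its boundedness, and the ``$\Rightarrow$'' direction agree with the paper. You are actually more careful than the paper about why $\Gamma_S$ annihilates $C^\infty_{0;S}(\Omega;\rX)$. One caveat there: a global cutoff $\eta$ with bounded gradient need not exist when the neighbourhood $U$ pinches at $\overline S\setminus S$. It is safer to localize: $\Gamma(\psi u)=\psi\,\Gamma u$ for $\psi\in C_0^\infty(U)$ (by density and continuity of $\Gamma$), and $\psi u=0$.

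\textbf{The gap.} The real problem is the last step of ``$\Leftarrow$'', at points of $\overline S\setminus S$. After flattening, locally $\Omega=\{x_n>0\}$ and $S'=\{x_n=0,\ x_1>g(x'')\}$. The only extension you have is by zero across $S'$, so $\tilde v$ lives on $\{x_n>0\}\cup\{x_1>g(x'')\}$. Take any $h$ with $h_n>0$, whatever its tangential part. Then $z\mapsto\tilde v(z-\delta h)$ is undefined at points $z\in\Omega$ with $z_n<\delta h_n$ lying over the complementary part $\{x_n=0,\ x_1<g(x'')\}$ at distance $\gtrsim\delta|h|$ from $\partial S'$. There the trace of $v$ is arbitrary and $\varphi_j v$ is generally nonzero. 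Naive fillings (e.g.\ even reflection) do not match the zero extension along $\{x_1=g(x''),\ x_n<0\}$. So translation plus mollification does not produce approximants on $\Omega$, and the Lipschitz character of $\partial S'$ does not fix this by itself.

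What is missing is an extension of $v$ to a full neighbourhood that vanishes below $S'$. One way to get it: straighten $\partial S'$, then apply a bi-Lipschitz map in the $(x_1,x_n)$-variables that opens the half-space into a quadrant, so that $S'$ and its complement become different faces. Reflect evenly across the non-$S$ face and extend by zero across the $S$-face. Only after this does your translation argument work. Separately, ``summing the localized approximations'' over an infinite locally finite family needs $\varepsilon 2^{-j}$ bookkeeping and a check that the sum still vanishes near $S$.

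\textbf{A simpler repair.} You can avoid $\overline S\setminus S$ altogether. This is also what the paper's own sketch needs to be rigorous: it uses only charts at points of $S$ (whose whole flat boundary lies in $S$) plus a piece $\varphi_0$ away from $S$, and its ``finite covering'' is unavailable when $\overline S\neq S$.

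The key observation is that $C^\infty_{0;S}(\Omega;\rX)$ only requires smoothness in $\Omega$ and vanishing on \emph{some} neighbourhood of $S$, which may pinch at $\overline S\setminus S$. Proceed as follows.
\begin{enumerate}
\item Since $\partial\Omega\setminus S$ is closed, cover $\Omega\cup S$ by balls $B\Subset\Omega$ and balls $B_z$, $z\in S$, with $B_z\cap\partial\Omega\subset S$. Take a partition of unity $\{\varphi_j\}$ subordinate to a locally finite refinement $\{V_j\}$ of this cover. Their union is an open set disjoint from $\partial\Omega\setminus S$.
\item For a boundary piece, $\Gamma(\varphi_j u)=\varphi_j\Gamma u=0$ on all of $\partial\Omega$. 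Hence $\varphi_j u\in\mathring W^1_p(\Omega;\rX)$ by the second half of \cite[Theorem 4.14]{Kr1}.
\item Approximate each boundary piece within $\varepsilon 2^{-j}$ by a $C_0^\infty(\Omega;\rX)$ function cut off inside $V_j$. Mollify the interior pieces, and sum as in the Meyers--Serrin theorem.
\end{enumerate}
Near each point of $S$ only finitely many terms are present, each with compact support in $\Omega$. So the sum is smooth in $\Omega$, vanishes on a neighbourhood of $S$, and lies within $\varepsilon$ of $u$. This route needs no flattening and no hypothesis on the relative boundary of $S$.
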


\begin{proof}
Let $\Gamma$ be the trace operator given by the preceding theorem. Define
$$
\Gamma_Su := (\Gamma u)|_S, \qquad u\in W^1_p(\Omega;\rX).
$$
Since the restriction operator
$$
L^p(\partial\Omega;\rX)\longrightarrow L^p(S;\rX)
$$
is bounded, $\Gamma_S$ is a bounded linear operator from
$W^1_p(\Omega;\rX)$ into $L^p(S;\rX)$. Moreover, if
$$
u\in W^1_p(\Omega;\rX)\cap C(\overline{\Omega};\rX),
$$
then, by the defining property of $\Gamma$,
$
\Gamma_Su=(\Gamma u)|_S=u|_S.
$

It remains to prove the characterization of the kernel of $\Gamma_S$.
Suppose first that
$
u\in\mathring{W}^{1}_{p;S}(\Omega;\rX). $
Then there exists a sequence
$\{u_j\}_{j\in\mathbb{N}}\subset C^\infty_{0;S}(\Omega;\rX)$ such that
$$
u_j\longrightarrow u
\qquad\text{in } \ W^1_p(\Omega;\rX).
$$
Since each $u_j$ vanishes in a neighborhood of $S$, we have
$\Gamma_Su_j=0,$ for all $ j\in \N.$
The continuity of $\Gamma_S$ therefore gives
$$
\Gamma_Su=
\lim_{j\to\infty}\Gamma_Su_j =0.
$$

Conversely, let $u\in W^1_p(\Omega;\rX)$ and 
$
\Gamma_Su=0.$ 
Since $S$ is a relatively open Lipschitz portion of $\partial\Omega$,
for every $z\in S$ there exists a neighborhood $U_z$ of $z$ and a
Lipschitz change of variables which maps
$\Omega\cap U_z$ onto a portion of a half-space and maps
$S\cap U_z$ onto a portion of its boundary. By a finite covering
argument, it is therefore enough to consider the local situation in a
Lipschitz half-ball.

In such a local coordinate system, the condition
$ 
\Gamma_Su=0$ 
means that the trace of $u$ on the flat part of the boundary is zero.
The standard zero-trace approximation result in a Lipschitz half-ball
then yields a sequence $  v_j\in C^\infty$ 
which vanishes in a neighborhood of the flat boundary and satisfies $ 
v_j\to u $ in $W^1_p $ 
on the corresponding local portion of $\Omega$.

Choose a partition of unity
$\{\varphi_j\}_{j=1}^N$ subordinate to the resulting finite covering
of a neighborhood of $S$, together with a function $\varphi_0$ whose
support is separated from $S$, such that $ 
\sum_{j=0}^N\varphi_j=1 $ 
on a neighborhood of $\overline{\Omega}$. 
Applying the preceding local
approximation to each $\varphi_j u$, $j=1,\ldots,N$, and using ordinary
mollification for $\varphi_0u$, we obtain a sequence
$ 
w_k\in C^\infty_{0;S}(\Omega;\rX)$ 
such that
$$
w_k\longrightarrow u
\qquad\text{in } \ W^1_p(\Omega;\rX).
$$
Hence
$$
u\in
\overline{C^\infty_{0;S}(\Omega;\rX)}
^{\,W^1_p(\Omega;\rX)}
=
\mathring{W}^{\,1}_{p;S}(\Omega;\rX).
$$
Therefore $ 
\ker\Gamma_S
\subset \mathring{W}^{\,1}_{p;S}(\Omega;\rX).$ 
Combining the two inclusions gives
$$
u\in\mathring{W}^{\,1}_{p;S}(\Omega;\rX)
\quad\Longleftrightarrow\quad
\Gamma_Su=0.
$$

\end{proof}

\subsection{Bochner--Sobolev spaces for functions defined on an interval}

Let $n=1$ and let $I=(0,2\pi)$. We consider the Bochner space
$L^p(I;\rX)$, endowed with the norm
\begin{equation}\label{eq-Boch-norm}
\|f\|_{L^p(I;\rX)}
=
\left(
\int_I\|f(x)\|_{\rX}^p\,dx
\right)^{1/p},
\qquad 1\leq p<\infty,
\end{equation}
and the corresponding Bochner--Sobolev space
$W_p^2(I;\rX)$, endowed with the norm
\begin{equation*}
\|f\|_{W_p^2(I;\rX)}
= \|f\|_{L^p(I;\rX)} + \|f'\|_{L^p(I;\rX)}+ \|f''\|_{L^p(I;\rX)}.
\end{equation*}


Consider the trigonometric system
$\{e^{inx}\}_{n\in\mathbb{Z}}$. An
\textit{$\rX$-valued trigonometric polynomial} is a function
$P:I\to\rX$ of the form
\[
P(x)=\sum_{k=-N}^{N}a_k e^{ikx},
\qquad x\in I,
\]
for some $N\in\mathbb{N}_0$ and coefficients $a_k\in\rX$.
We denote by $\mathcal{P}(\rX)$ the collection of all such
polynomials.

The following result is the vector-valued extension of the classical
density theorem for trigonometric polynomials (cf. \cite{HNVW}).

\begin{prp}\label{2.1}
Let $\rX$ be a Banach space. Then $\mathcal{P}(\rX)$ is dense in
$L^p(I;\rX)$ for every $1\leq p<\infty$.
\end{prp}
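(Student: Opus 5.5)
The plan is to reduce to the scalar density of trigonometric polynomials by passing through simple functions, and then to use Fejér means to control the approximation uniformly. By Lemma~\ref{lem:simple-density}, $\mu$-simple $\rX$-valued functions are dense in $L^p(I;\rX)$. Such a function has the form $f=\sum_{k=1}^n \chi_{A_k}x_k$, with $A_k\subset I$ measurable and pairwise disjoint and $x_k\in\rX$. Given $\varepsilon>0$, I would first choose a simple $f$ with $\|g-f\|_{L^p(I;\rX)}<\varepsilon/2$. It then suffices to approximate each elementary term $\chi_{A_k}x_k$ by an element of $\mathcal P(\rX)$ to within $\varepsilon/(2n)$, and conclude by the triangle inequality, since $\mathcal P(\rX)$ is a vector space.

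For the elementary term I use the scalar classical theorem: trigonometric polynomials are dense in $L^p(I)$ for $1\le p<\infty$. This follows, for example, from the fact that the Fejér means $\sigma_N h=F_N*h$ converge to $h$ in $L^p$, because $F_N$ is a positive approximate identity and translation is continuous in $L^p$. Choose a scalar trigonometric polynomial $q_k(x)=\sum_{|j|\le N}c_j e^{ijx}$ with $\|\chi_{A_k}-q_k\|_{L^p(I)}<\varepsilon/(2n(\|x_k\|_{\rX}+1))$. Then $P_k:=\sum_{|j|\le N}(c_jx_k)e^{ijx}\in\mathcal P(\rX)$ and $P_k=q_kx_k$. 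The norm of an elementary tensor factorizes:
\begin{equation*}
\|\chi_{A_k}x_k-q_kx_k\|_{L^p(I;\rX)}=\|x_k\|_{\rX}\,\|\chi_{A_k}-q_k\|_{L^p(I)},
\end{equation*}
which follows directly from \eqref{eq-Boch-norm}. This gives the required estimate for each term.

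The only genuine input is the scalar density theorem. If it is to be proved rather than quoted, the main obstacle is the $L^p$ convergence of Fejér means. That convergence follows from Minkowski's integral inequality,
\begin{equation*}
\|F_N*h-h\|_p\le\frac1{2\pi}\int_I F_N(t)\,\|h(\cdot-t)-h\|_p\,dt,
\end{equation*}
where $h$ is extended periodically. One then splits the integral into $|t|<\delta$, where continuity of translation is used, and $|t|\ge\delta$, where $F_N\to0$ uniformly. The same argument works verbatim for $\rX$-valued $h$ with the Bochner norm, giving an alternative direct proof. No UMD assumption is needed, since Fejér kernels are positive and this avoids the partial-sum projections.
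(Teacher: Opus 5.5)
Your proof is correct. The paper gives no argument for this proposition; it only quotes it as the vector-valued form of the classical density theorem, citing \cite{HNVW}, so your proposal supplies a proof where the paper has none.

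Your main route first uses density of $\mu$-simple functions (Lemma~\ref{lem:simple-density}). It then approximates each $\chi_{A_k}$ by a scalar trigonometric polynomial, using $\|(\chi_{A_k}-q_k)x_k\|_{L^p(I;\rX)}=\|x_k\|_{\rX}\|\chi_{A_k}-q_k\|_{L^p(I)}$. This is the same tensor-product reasoning the paper uses in the $\otimes$-completeness part of Theorem~\ref{T3.4}: approximate elementary tensors, then conclude by density of $L^p(I)\otimes\rX$. It needs nothing beyond the scalar theorem.

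Your alternative Fejér argument is more self-contained and gives more. It produces one linear approximation scheme $\sigma_N$ of norm at most one on $L^p(I;\rX)$, valid for every Banach space $\rX$. This contrasts usefully with the partial sums $S_n$, whose uniform boundedness genuinely requires the UMD property (Theorem~\ref{T2.2}).

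Two cosmetic points. First, the coefficients $c_jx_k$ with complex $c_j$ implicitly assume $\rX$ is complex, as the paper also does. For real $\rX$, use $\sum(a_k\cos kx+b_k\sin kx)$; the same proof works because Fejér means of real functions are real. Second, in the Fejér estimate, split into $|t|<\delta$ and $|t|\ge\delta$ over the symmetric period $(-\pi,\pi)$, not over $I=(0,2\pi)$.
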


Moreover, we recall the following characterization of the boundedness of the Riesz projection on vector-valued $L^p$ spaces. In particular, for $1<p<\infty$, the Riesz projection is bounded on $L^p(I;\rX)$ whenever $\rX$ is a UMD space; see, for instance, \cite{B8,B0,S}, where the corresponding result is established for exponential Fourier series.

\begin{thm}[\cite{B8}]\label{T2.2}
Let $\rX$ be a UMD space and let $p\in(1,\infty)$. Then the exponential
system $
\{e^{inx}\}_{n\in\mathbb Z} $
forms an $\otimes$-basis of $L^p(I;\rX)$. Moreover, for every
$m\in\mathbb Z$, the Riesz projections
$$
R_m^+f(x) = \sum_{n=m}^{\infty}\widehat f(n)e^{inx},
\qquad R_m^-f(x) =\sum_{n=-\infty}^{m-1}\widehat f(n)e^{inx},
$$
are bounded linear operators on $L^p(I;\rX)$, where
$$
\widehat f(n) =
\frac{1}{2\pi} \int_0^{2\pi}f(x)e^{-inx}\,dx,
\qquad n\in\mathbb Z.
$$
\end{thm}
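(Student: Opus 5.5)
The plan is to verify the three conditions of Theorem~\ref{T1} for the system $\{e^{inx}\}_{n\in\mathbb Z}$, enumerated in the order $0,1,-1,2,-2,\dots$. Boundedness of the Riesz projections is obtained first, since it is what controls the partial sums. As the natural candidate for the $\otimes$-biorthonormal system we take the vector-valued Fourier coefficient functionals
$$
t_n(g):=\widehat g(n)=\frac{1}{2\pi}\int_0^{2\pi} g(x)e^{-inx}\,dx .
$$
By H\"older's inequality for Bochner integrals, $\|t_n(g)\|_{\rX}\le (2\pi)^{-1/p}\|g\|_{L^p(I;\rX)}$, so $t_n\in[L^p(I;\rX);\rX]$. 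Moreover $t_n(xe^{ikx})=x\,\frac{1}{2\pi}\int_0^{2\pi}e^{i(k-n)x}\,dx=\delta_{nk}x$, which gives condition (2). Condition (1) is exactly Proposition~\ref{2.1}, because $\mathcal P(\rX)=L_\otimes[\{e^{inx}\}]$. Uniqueness of the biorthonormal system then follows from Lemma~\ref{L1}.

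The core step is boundedness of the Riesz projection $R_0^+$ on $L^p(I;\rX)$ for UMD $\rX$ and $1<p<\infty$. I would invoke the Burkholder--Bourgain theorem: for UMD spaces the periodic Hilbert transform (conjugate function) $H$ is bounded on $L^p(\mathbb T;\rX)$. First define $R_0^+$ on $\mathcal P(\rX)$ through the identity $R_0^+ f=\tfrac12\bigl(f+iHf\bigr)+\tfrac12\widehat f(0)$. This identity follows coefficientwise from the action of $H$ on $e^{inx}$, namely $H e^{inx}=-i\,\mathrm{sgn}(n)e^{inx}$. Hence $\|R_0^+f\|\le C\|f\|$ on $\mathcal P(\rX)$, and the operator extends by density (Proposition~\ref{2.1}). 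For general $m$, write $R_m^+f=e^{imx}R_0^+(e^{-imx}f)$; multiplication by a unimodular function is an isometry, so $R_m^+$ is bounded with the same norm. Finally $R_m^-=\mathrm{Id}-R_m^+$.

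With this in hand, condition (3) follows by writing every partial sum as a combination of Riesz projections. The symmetric sum is
$$
S_N f=\sum_{|n|\le N}\widehat f(n)e^{inx}=R_{-N}^+f-R_{N+1}^+f,
$$
so $\|S_N\|\le 2\sup_m\|R_m^+\|=2\|R_0^+\|$. The remaining partial sums in the chosen enumeration differ from some $S_N$ by a single term $\widehat f(n)e^{inx}$, whose norm is at most $C\|f\|$ by the estimate on $t_n$. Thus $\sup_m\|P_m\|<\infty$, and Theorem~\ref{T1} yields the $\otimes$-basis property.

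The main obstacle is the boundedness of the vector-valued Hilbert transform, i.e.\ the implication UMD $\Rightarrow$ $H$ bounded. I would not reprove it; I would quote it from \cite{HNVW} (Bourgain's transference from the real line together with Burkholder's martingale estimate). If a self-contained route were desired, the first attempt would be Bourgain's argument comparing the conjugate function with martingale transforms built from dyadic Haar/Paley--Walsh martingales. The remaining steps are routine density and translation arguments.
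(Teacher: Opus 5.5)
Your argument is correct and matches what the paper has in mind. The paper gives no proof of its own: it quotes the result from \cite{B8} and frames it around the boundedness of the Riesz projection on $L^p(I;\rX)$ for UMD $\rX$. That is exactly what you derive from the Burkholder--Bourgain theorem before checking the three conditions of Theorem~\ref{T1} via $S_N=R_{-N}^+-R_{N+1}^+$.

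One small correction to your closing remark: Bourgain's comparison of the conjugate function with Paley--Walsh martingale transforms proves the converse implication ($H$ bounded $\Rightarrow$ UMD). The direction you need is Burkholder's, or alternatively follows from the dyadic-shift representation of $H$. Since you only quote this direction, the proof is unaffected.
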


The symmetric Fourier partial sums associated with the exponential system can be written in terms of sine and cosine functions. Hence, Theorem~\ref{T2.2} yields the following result.

\begin{crlr}\label{cor-2.2}
Let $\rX$ be a UMD space and let $p\in(1,\infty)$. Then the ordered
trigonometric system
$$
\mathcal{T} = \{1,\cos x,\sin x,\cos(2x),\sin(2x),\ldots\}
$$
forms an $\otimes$-basis of  $L^p(I;\rX)$. More precisely, for every
$f\in L^p(I;\rX)$,
$ f =
\lim_{n\to\infty}S_nf $
in $L^p(I;\rX)$, where
$$
S_nf(x) =
\ell_0^c(f) +
\sum_{k=1}^{n}
\bigl(
\ell_k^c(f)\cos(kx) +
\ell_k^s(f)\sin(kx)
\bigr),
$$
and
\begin{align*}
\ell_0^c(f)& =
\frac{1}{2\pi}
\int_I f(x)\,dx, \quad 
\ell_k^c(f) =  \frac{1}{\pi}
\int_I f(x)\cos(kx)\,dx,\\
\ell_k^s(f)& =
\frac{1}{\pi}
\int_I f(x)\sin(kx)\,dx,
\qquad k\in\mathbb N.
\end{align*}
Here all integrals are understood in the Bochner sense.
\end{crlr}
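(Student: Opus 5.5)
The plan is to derive the statement directly from Theorem~\ref{T2.2} by rewriting the symmetric exponential partial sums in real form. The criterion of Theorem~\ref{T1} then upgrades this to the $\otimes$-basis property.

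First, for $f\in L^p(I;\rX)$ and $n\in\N$, set
$$
E_nf(x)=\sum_{k=-n}^{n}\widehat f(n)\big|_{n=k}\, e^{ikx}
=\sum_{k=-n}^{n}\widehat f(k)e^{ikx}.
$$
By Euler's formula, $\widehat f(k)e^{ikx}+\widehat f(-k)e^{-ikx}=\ell_k^c(f)\cos(kx)+\ell_k^s(f)\sin(kx)$ for $k\ge1$, since $\widehat f(k)+\widehat f(-k)=\ell_k^c(f)$ and $i(\widehat f(k)-\widehat f(-k))=\ell_k^s(f)$. Here we use linearity of the Bochner integral and the fact that it commutes with multiplication by scalar functions. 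Also $\widehat f(0)=\ell_0^c(f)$, so $E_nf=S_nf$.

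Second, I show $S_nf\to f$ in $L^p(I;\rX)$. I write
$$
E_nf=R_{-n}^+f-R_{n+1}^+f=e_{-n}R_0^+(e_nf)-e_{n+1}R_0^+(e_{-n-1}f),
$$
where $e_m(x)=e^{imx}$. Multiplication by $e_m$ is an isometry of $L^p(I;\rX)$, and $R_0^+$ is bounded by Theorem~\ref{T2.2}. Hence $\sup_n\|E_n\|\le 2\|R_0^+\|<\infty$. On $\mathcal P(\rX)$ we have $E_nP=P$ for large $n$. Since $\mathcal P(\rX)$ is dense by Proposition~\ref{2.1}, a standard $\varepsilon/3$ argument gives $E_nf\to f$ for all $f$.

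Third, I verify the $\otimes$-basis property for the ordered system $\mathcal T$, either via Theorem~\ref{T1} or directly. Uniqueness of coefficients will come from the functionals $\ell_0^c,\ell_k^c,\ell_k^s\in[L^p(I;\rX);\rX]$. Each is bounded by H\"older's inequality and is $\otimes$-biorthonormal to $\mathcal T$: for instance $\ell_k^c(x\cos(jx))=x\cdot\frac1\pi\int_I\cos(kx)\cos(jx)\,dx=\delta_{kj}x$. If $\sum x_jf_j$ converges to $f$, applying a continuous coefficient functional term by term recovers $x_j$, so the coefficients are unique. $\otimes$-completeness follows from the convergence of $S_nf$.

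The main obstacle is convergence of the partial sums along the full ordered sequence, that is, also after adding only a cosine term and not yet the matching sine term. These partial sums are $S_{n}f+\ell_{n+1}^c(f)\cos((n+1)x)$. So I need $\|\ell_{n+1}^c(f)\cos((n+1)\cdot)\|_{L^p(I;\rX)}\le C\|\ell_{n+1}^c(f)\|_\rX\to0$. This reduces to a vector-valued Riemann--Lebesgue lemma. I will prove it by the same density argument: the coefficients vanish for large index on $\mathcal P(\rX)$, and the functionals are uniformly bounded by $\frac1\pi(2\pi)^{1/p'}$. With this, all partial sums of the ordered expansion converge to $f$, which completes the proof.
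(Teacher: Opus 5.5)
Your proposal is correct and follows the paper's route: Euler's formula identifies the symmetric exponential partial sums with $S_nf$, and Theorem~\ref{T2.2} gives their convergence in $L^p(I;\rX)$. Your additions are details the paper leaves implicit, and they make the $\otimes$-basis claim for the stated ordering fully rigorous: the bound $\sup_n\|E_n\|\le 2\|R_0^+\|$ obtained via modulation, the Riemann--Lebesgue control of the intermediate partial sums $S_nf+\ell_{n+1}^c(f)\cos((n+1)x)$, and uniqueness of coefficients from the biorthonormal functionals.
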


Let us note that, in this case, the corresponding biorthogonal system
of $\mathcal{T}$ is obtained from $\mathcal{T}$ by suitable
normalization. More precisely,
$$
\mathcal{T}^* =
\left\{\frac{1}{2\pi}, \ 
\frac{1}{\pi}\cos(nx), \ 
\frac{1}{\pi}\sin(nx)
\right\}_{n\in\mathbb{N}},
$$
and the Fourier coefficients are given by the pairings with the
elements of $\mathcal{T}^*$.

We introduce the following notion.

\begin{defin}
The $\otimes$-basis $\mathcal{T}$ in $L^p(I;\rX)$, with
$p\in(1,\infty)$, is said to have the
$\otimes$-Riesz property if the cosine and sine partial-sum operators
$$
S_n^c f(x) :=
\sum_{k=0}^n \ell_k^c(f)\cos(kx),
\qquad
S_n^s f(x):=
\sum_{k=1}^n \ell_k^s(f)\sin(kx),
\qquad n\in\mathbb{N},
$$
are uniformly bounded on
$L^p(I;\rX)$, that is,
$$
\sup_{n\in\mathbb{N}}
\left(
\|S_n^c\|_{[L^p(I;\rX)]}
+\|S_n^s\|_{[L^p(I;\rX)]}
\right) <\infty.
$$
\end{defin}

For further details on these and related topics, we refer  to the monographs \cite{HNVW,R} and the papers \cite{B99,BHS,B8,B9,B0,S}. The following result holds.
\begin{prp}\label{propT}
Let $\rX$ be a UMD space. Then the ordered trigonometric system
$$
\mathcal{T} =
\{1,\cos x,\sin x,\cos(2x),\sin(2x),\ldots\}
$$
forms a $\otimes$-basis of $L^p(I;\rX)$ for every
$p\in(1,\infty)$ and possesses the $\otimes$-Riesz property.
\end{prp}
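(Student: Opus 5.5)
The basis part of the statement is exactly Corollary~\ref{cor-2.2}, so the work lies in the $\otimes$-Riesz property: a uniform bound for the cosine and sine partial-sum operators $S_n^c$, $S_n^s$. The plan is to write these in terms of the exponential partial sums and the reflection operator
$$
(Jf)(x):=f(2\pi-x),\qquad f\in L^p(I;\rX).
$$
The operator $J$ is an isometry of $L^p(I;\rX)$, since the change of variables $x\mapsto 2\pi-x$ preserves the Bochner norm \eqref{eq-Boch-norm}.

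First compute how $J$ acts on Fourier coefficients. Substituting $x\mapsto 2\pi-x$ in the Bochner integral and using $e^{-in(2\pi-x)}=e^{inx}$ gives $\widehat{Jf}(n)=\widehat f(-n)$. Next, grouping $n$ and $-n$ in the symmetric exponential partial sum $D_nf:=\sum_{|k|\le n}\widehat f(k)e^{ikx}$ gives
$$
S_n^c f=\tfrac12\bigl(D_nf+D_n(Jf)\bigr),\qquad S_n^s f=\tfrac12\bigl(D_nf-D_n(Jf)\bigr).
$$
To check this, note that $\ell_k^c(f)=\widehat f(k)+\widehat f(-k)$ and $\ell_k^s(f)=i\bigl(\widehat f(k)-\widehat f(-k)\bigr)$ for $k\ge1$, and $\ell_0^c(f)=\widehat f(0)$. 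Together with the identity $\widehat{Jf}(k)=\widehat f(-k)$, the formulas follow from a direct rearrangement of the finite sums. Each coefficient is a finite Bochner integral, so linearity is immediate.

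It then suffices to show $\sup_n\|D_n\|_{[L^p(I;\rX)]}<\infty$. This follows from Theorem~\ref{T2.2}. Since $D_n=R^+_{-n}-R^+_{n+1}$, we need the norms of the Riesz projections to be uniformly bounded in $m$. To obtain this, write $R^+_m=M_mR^+_0M_{-m}$, where $(M_mf)(x)=e^{imx}f(x)$. The modulation $M_m$ is an isometry on $L^p(I;\rX)$ and shifts Fourier coefficients by $m$. The identity can be checked on $\mathcal P(\rX)$, which is dense by Proposition~\ref{2.1}, and then extended by continuity. Hence $\|R^+_m\|=\|R^+_0\|$ for every $m$, and therefore $\|D_n\|\le2\|R^+_0\|$. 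Combining this with $\|J\|=1$ yields $\|S_n^c\|+\|S_n^s\|\le 2\|D_n\|\le 4\|R^+_0\|$, uniformly in $n$.

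The main obstacle is the uniformity of $\|R^+_m\|$ in $m$. Theorem~\ref{T2.2} as stated only asserts boundedness for each fixed $m$, and the modulation argument is what removes the dependence on $m$. The care needed is to verify the conjugation identity on trigonometric polynomials and then pass to the closure, using only the density of $\mathcal P(\rX)$ and the boundedness of $R^+_0$. If one prefers to avoid this, the uniform boundedness of $D_n$ also follows directly from the fact that $\{e^{inx}\}$ is a $\otimes$-basis with symmetric ordering, via condition~(3) of Theorem~\ref{T1}.
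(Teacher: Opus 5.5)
Your proof is correct and follows the paper's route: your reflection $J$ is exactly the paper's even/odd splitting of the periodic extension, $f^{\pm}=\frac12(f\pm Jf)$, so your identities $S_n^cf=\frac12(D_nf+D_nJf)$ and $S_n^sf=\frac12(D_nf-D_nJf)$ are the paper's $S_n^cf=S_nf^+$ and $S_n^sf=S_nf^-$. The only difference is how uniformity in $n$ is obtained. The paper applies Banach--Steinhaus to the convergent sequences $S_n^cf\to f^+$ and $S_n^sf\to f^-$, which is essentially your fallback remark, so the uniformity of $\|R_m^+\|$ in $m$ that you flag as the main obstacle never has to be addressed. Your modulation identity $R_m^+=M_mR_0^+M_{-m}$ instead gives the explicit bound $\|S_n^c\|+\|S_n^s\|\le 4\|R_0^+\|$, a quantitative bonus that is not needed here.
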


\begin{proof}
By Theorem~\ref{T2.2}, the exponential system
$\{e^{inx}\}_{n\in\mathbb Z}$ is a $\otimes$-basis in
$L^p(I;\rX)$, and its symmetric Fourier partial sums
$$
P_nf(x) =\sum_{k=-n}^{n}\widehat f(k)e^{ikx}
$$
converge to $f$ in $L^p(I;\rX)$.
For $k\geq1$, we have
$$
e^{ikx}=\cos(kx)+i\sin(kx),
\qquad e^{-ikx}=\cos(kx)-i\sin(kx),
$$
and
$$
\widehat f(k) =
\frac12\left(\ell_k^c(f)-i\ell_k^s(f)\right),
\qquad
\widehat f(-k) =
\frac12\left(\ell_k^c(f)+i\ell_k^s(f)\right).
$$

Consequently,
$$
\widehat f(k)e^{ikx} +
\widehat f(-k)e^{-ikx}
= \ell_k^c(f)\cos(kx) +
\ell_k^s(f)\sin(kx).
$$
Hence
$$
P_nf(x) =
\ell_0^c(f) +
\sum_{k=1}^{n}
\bigl(
\ell_k^c(f)\cos(kx) +
\ell_k^s(f)\sin(kx)
\bigr) =
S_nf(x).
$$
Therefore $ 
S_nf\longrightarrow f
\qquad\text{in }L^p(I;\rX). $ 
Thus $\mathcal{T}$ forms a $\otimes$-basis of $L^p(I;\rX)$.

It remains to prove the $\otimes$-Riesz property. Suppose that $f$ is
extended to a $2\pi$-periodic function. For
$$
f=f^++f^-,\qquad
f^\pm(x)=\frac{f(x)\pm f(-x)}{2}.
$$
Then $f^+$ is even and $f^-$ is odd. Hence
$ \ell_k^c(f^-)=0,$ $ \ell_k^s(f^+)=0, $
and therefore
$$
S_n^cf=S_nf^+,
\qquad S_n^sf=S_nf^-.
$$
Since $S_nf\to f$ in $L^p(I;\rX)$, it follows that
$$
S_n^cf\longrightarrow f^+,
\qquad
S_n^sf\longrightarrow f^-
$$
in $L^p(I;\rX).$  Consequently,
$$
\sup_{n\in\mathbb N}\|S_n^cf\|_{L^p(I;\rX)}<\infty,
\qquad
\sup_{n\in\mathbb N}\|S_n^sf\|_{L^p(I;\rX)}<\infty.
$$
By the Banach--Steinhaus theorem, the families
$\{S_n^c\}_{n\in\mathbb N}$ and 
$\{S_n^s\}_{n\in\mathbb N}$ are uniformly
bounded on $L^p(I;\rX)$. Hence $\mathcal{T}$ possesses the $\otimes$-Riesz property.
\end{proof}

\subsection{Bochner--Sobolev spaces for functions defined in an infinite strip}

We now adapt the above definitions to functions defined on the unbounded
strip
\[
\Pi=I\times(0,\infty)\subset\mathbb{R}^2.
\]

In what follows, we consider functions possessing a
\textit{mixed regularity} property. To this end, we introduce the
\textit{mixed-norm Bochner space} $L^{p,1}(\Pi;\rX)$ defined by
\begin{equation*}
\|u\|_{L^{p,1}(\Pi;\rX)}
=
\int_0^\infty
\left(
\int_0^{2\pi}
\|u(x,y)\|_{\rX}^p\,dx
\right)^{1/p}dy
=
\int_0^\infty
\|u(\cdot,y)\|_{L^p(I;\rX)}\,dy.
\end{equation*}

The \emph{mixed-norm Bochner--Sobolev space}
$W^m_{p,1}(\Pi;\rX)$ is defined as the space of all functions
$u\in L^{p,1}(\Pi;\rX)$ whose weak derivatives up to order $m$
exist and belong to $L^{p,1}(\Pi;\rX)$. It is equipped with the norm
\begin{equation*}
\|u\|_{W^m_{p,1}(\Pi;\rX)}
=
\sum_{|\alpha|\leq m}
\|\partial^\alpha u\|_{L^{p,1}(\Pi;\rX)}.
\end{equation*}


Let $u\in W^1_{p,1}(\Pi;\rX)$. Then, for every $\xi>0$, the restriction
of $u$ to $\Pi_\xi:= I\times (0,\xi)$ belongs to $W^1_{p,1}(\Pi_\xi;\rX)$. Hence, by the
mixed-norm trace theorem, for every $\xi>0$ there exists a bounded linear
operator
$$
\Gamma_{J_0^\xi} \in[W^1_{p,1}(\Pi_\xi;\rX);L^1(J_0^\xi;\rX)]
$$
such that $  \Gamma_{J_0^\xi}u=u|_{J_0^\xi}$ 
for every
$$
u\in W^1_{p,1}(\Pi_\xi;\rX)
\cap C(\overline{\Pi}_\xi;\rX).
$$

From the uniqueness of the trace operator, the compatibility condition
\begin{equation*}
\left.
\big(\Gamma_{J_0^{\xi_2}}u\big)
\right|_{J_0^{\xi_1}}
=
\Gamma_{J_0^{\xi_1}}u,
\qquad
\text{a.e. on }J_0^{\xi_1},
\qquad
0<\xi_1<\xi_2.
\end{equation*}
Consequently, there exists a unique global trace operator
\begin{align*}
\Gamma_{J_0}:
W^1_{p,1}(\Pi;\rX)
&\longrightarrow L^1_{\mathrm{loc}}(J_0;\rX),\\
u&\longmapsto\Gamma_{J_0}u,
\end{align*}
such that, for every $\xi>0$,
\begin{equation*}
\left.
\Gamma_{J_0}u
\right|_{J_0^\xi}
=
\Gamma_{J_0^\xi}u
\qquad
\text{in }L^1(J_0^\xi;\rX).
\end{equation*}
In particular, $ 
\Gamma_{J_0}u=u|_{J_0}$ 
 on $J_0$ 
for every
$$
u\in W^1_{p,1}(\Pi;\rX)
\cap C(\overline{\Pi};\rX).
$$
Moreover, for every $\xi>0$, the restriction
$$
\left.\Gamma_{J_0}\right|_{J_0^\xi}
: W^1_{p,1}(\Pi;\rX)
\longrightarrow L^1(J_0^\xi;\rX)
$$
is bounded.

Analogously, one defines the trace operator $\Gamma_{J_{2\pi}}$
corresponding to $J_{2\pi}\subset\partial\Pi$.
For simplicity, we set $ 
J_1:=J_0,$ $
J_2:=J_{2\pi}.$ 

\begin{prp}
There exist trace operators
\begin{align*}
\Gamma_{J_k}:
W^1_{p,1}(\Pi;\rX)
&\longrightarrow L^1_{\mathrm{loc}}(J_k;\rX),
\\
u&\longmapsto\Gamma_{J_k}u,
\qquad k=1,2,
\end{align*}
such that $ 
\Gamma_{J_k}u=u|_{J_k}$  a.e. 
 on $J_k$ for every
$ 
u\in W^1_{p,1}(\Pi;\rX)\cap C(\overline{\Pi};\rX).$ 
Moreover, for every $\xi>0$,
$$
\Gamma_{J_k}u
\big|_{J_k^\xi} =
\Gamma_{J_k^\xi}(u|_{\Pi_\xi}) \qquad
\text{ in } \ L^1(J_k^\xi;\rX),
\qquad k=1,2.
$$
\end{prp}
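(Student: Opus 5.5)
The plan is to glue the local traces on the truncated strips $\Pi_\xi$, exactly as sketched in the paragraph preceding the statement. The argument is identical for $k=1$ and $k=2$, so I treat $J_1=J_0$. For each $\xi>0$ I first produce a bounded trace operator $\Gamma_{J_0^\xi}\in[W^1_{p,1}(\Pi_\xi;\rX);L^1(J_0^\xi;\rX)]$. I would not rely on an unproved mixed-norm trace theorem. Instead I note that on the bounded domain $\Pi_\xi$, Hölder's inequality in $y$ gives a continuous embedding $W^1_{p,1}(\Pi_\xi;\rX)\hookrightarrow W^1_1(\Pi_\xi;\rX)$. Since $\Pi_\xi$ is a Lipschitz rectangle and $J_0^\xi$ is a relatively open Lipschitz portion of its boundary, Corollary~\ref{cor2.2} with exponent $1$ yields a bounded operator into $L^1(J_0^\xi;\rX)$. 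Composing the two gives $\Gamma_{J_0^\xi}$, and it satisfies $\Gamma_{J_0^\xi}u=u|_{J_0^\xi}$ for $u$ continuous up to $\overline{\Pi}_\xi$.

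The next step is the compatibility relation $(\Gamma_{J_0^{\xi_2}}u)|_{J_0^{\xi_1}}=\Gamma_{J_0^{\xi_1}}(u|_{\Pi_{\xi_1}})$ for $\xi_1<\xi_2$. Both sides are bounded linear maps $W^1_{p,1}(\Pi_{\xi_2};\rX)\to L^1(J_0^{\xi_1};\rX)$, since restriction $\Pi_{\xi_2}\to\Pi_{\xi_1}$ is bounded. They agree on restrictions of $C_0^\infty(\R^2;\rX)$ functions, where each side is pointwise restriction. By the density proposition for domains with the segment condition, applied in $W^1_1(\Pi_{\xi_2};\rX)$, where all the operators are actually bounded, the two maps coincide.

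I then define $\Gamma_{J_0}u$ on $J_0=\bigcup_{N}J_0^N$ by $\Gamma_{J_0}u:=\Gamma_{J_0^\xi}(u|_{\Pi_\xi})$ on $J_0^\xi$. Compatibility makes this well defined a.e. and independent of $\xi$. Its restriction to each $J_0^\xi$ is integrable, so $\Gamma_{J_0}u\in L^1_{\loc}(J_0;\rX)$, where "loc" means local up to $y=\infty$, i.e.\ on every $J_0^\xi$. Linearity follows from linearity of each $\Gamma_{J_0^\xi}$. The displayed identity $\Gamma_{J_k}u|_{J_k^\xi}=\Gamma_{J_k^\xi}(u|_{\Pi_\xi})$ holds by construction. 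For $u\in W^1_{p,1}(\Pi;\rX)\cap C(\overline\Pi;\rX)$ we have $u|_{\Pi_\xi}\in C(\overline\Pi_\xi;\rX)$, so $\Gamma_{J_0}u=u|_{J_0}$ a.e. on each $J_0^\xi$, hence on $J_0$.

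The main obstacle is the density and approximation step needed to identify the two traces on continuous functions. The corollary as stated only asserts $\Gamma_S u=u|_S$ for $u\in W^1_p\cap C(\overline\Omega)$, and to compare two operators I need a common dense class on which both are explicit. I would use the segment-condition density proposition, whose smooth restrictions are continuous up to the boundary, so both trace operators reduce to genuine restriction there. Everything else is routine bookkeeping.
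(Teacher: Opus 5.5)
Your proof is correct and follows the same skeleton as the paper's argument, which is the paragraph just before the statement. Both construct a bounded trace $\Gamma_{J_0^\xi}$ on each truncated strip $\Pi_\xi$, check compatibility for $\xi_1<\xi_2$, and glue into an element of $L^1_{\loc}(J_0;\rX)$.

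You differ in how the two ingredients are justified. The paper appeals to an unspecified ``mixed-norm trace theorem'' for the local operator and to ``uniqueness of the trace operator'' for compatibility. You obtain $\Gamma_{J_0^\xi}$ by composing the embedding $W^1_{p,1}(\Pi_\xi;\rX)\hookrightarrow W^1_1(\Pi_\xi;\rX)$ with Corollary~\ref{cor2.2} at exponent $1$. You then prove compatibility by comparing two bounded operators on a dense class in $W^1_1(\Pi_{\xi_2};\rX)$. Working in $W^1_1$ is the right choice, since the paper's segment-condition density result is stated for $W^m_p$ and not for the mixed-norm spaces. This makes the argument self-contained relative to what the paper actually quotes, at no cost: the paper's local traces also land only in $L^1$.

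One correction: the embedding comes from H\"older's inequality in $x$ over the bounded interval $I$, not in $y$. Explicitly, $\int_I\|u(x,y)\|_{\rX}\,dx\le(2\pi)^{1/p'}\|u(\cdot,y)\|_{L^p(I;\rX)}$ for each fixed $y$. In particular the embedding does not use boundedness of $\Pi_\xi$ in the $y$-direction and holds on all of $\Pi$. Applying your reduction directly on the half-strip, whose boundary is uniformly Lipschitz, would even give a bounded operator $W^1_{p,1}(\Pi;\rX)\to L^1(J_0;\rX)$, making the gluing step optional.
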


\section{Generalized spectral method}

In order to study the existence of solutions to problem \eqref{BVP}, we
adapt the classical \textit{Fourier method}, based on the separation of
variables for the Laplace equation, see \cite{Pi}. Representing a  solution in the form $ u(x,y)=\varphi(x)\psi(y),$ 
leads  to  two second-order linear ordinary differential equations.

Since we are working in Bochner spaces endowed with an $\otimes$-product
structure, the classical Fourier method must be modified to fit our
framework.

The first generalization concerns the spectral  problem associated
with the $x$-dependence. We consider the following \textit{spectral problem}:
\begin{equation}\label{SPE}
\begin{cases}
\varphi''(x)+\lambda\varphi(x)=0,
& x\in (0,2\pi),\\
\varphi(0)=\varphi(2\pi),\\
\varphi'(0)=0.
\end{cases}
\end{equation}
The   eigenvalues and  corresponding eigenfunctions are 
\begin{equation}\label{eq-eigen}
\lambda_n=n^2, \quad  \varphi^c_n(x)=\cos(nx),\quad n=0,1,2,\ldots.
\end{equation}

Since the system of eigenfunctions is not $\otimes$-complete in
$L^p(I;\rX)$, we additionally need to determine the corresponding
\textit{associated functions}, which we denote by
$\{\varphi_n^s\}_{n\in\mathbb N}$.

These functions are defined by the Jordan-chain 
equations
$$
(\mathcal{L}-\lambda_n\mathcal{I})\varphi_n^s(x) =
\varphi_n^c(x), \qquad n=1,2,\ldots.
$$
Since  $\mathcal{L}=-\partial^2$ and $\lambda_n=n^2,$ 
 this equation is equivalent to 
$$
(\varphi_n^s)''(x)+n^2\varphi_n^s(x)=-\cos(nx),
\qquad x\in I.
$$
A particular solution is
$$
\varphi_n^s(x) = -\frac{x}{2n}\sin(nx), \qquad n=1,2,\ldots.
$$
Moreover, $ 
\varphi_n^s(0)=\varphi_n^s(2\pi), $ $
(\varphi_n^s)'(0)=0.$ 

Thus, we consider the following system of root functions:
\begin{equation}\label{eq-system}
\varphi_0^c=1, \ 
\varphi_n^c(x)=\cos(nx), \ 
\varphi_n^s(x)=-\frac{x}{2n}\sin(nx), \quad 
n\in\mathbb N.
\end{equation}

It is well known (see \cite{HMVZ,Pi}) that the system
\eqref{eq-system} is not orthonormal with respect to the standard
$L^2(I)$ inner product. We therefore introduce  its 
biorthonormal  system in order to compute the expansion coefficients
(see, e.g., \cite{B4,Mo}). Namely, consider
\begin{equation}\label{eq-system2}
v_0^c(x)=\frac{2\pi-x}{2\pi^2}, \quad  
v_n^c(x)=\frac{2\pi-x}{\pi^2}\cos(nx),\quad
v_n^s(x)=-\frac{2n}{\pi^2}\sin(nx).
\end{equation}

The system \eqref{eq-system2} is biorthonormal  to the system
\eqref{eq-system}. More precisely,  
\begin{equation}\label{eq-normal}
\begin{split}
&\int_0^{2\pi}v_0^c(x)\varphi_0^c(x)\,dx=1,\\
&\int_0^{2\pi}v_n^c(x)\varphi_m^c(x)\,dx=\delta_{nm}, \quad
\int_0^{2\pi}v_n^s(x)\varphi_m^s(x)\,dx=\delta_{nm}.
\end{split}
\end{equation}

Hence, with the natural ordering of the root functions, the two
systems are biorthonormal. By the $\otimes$-basis theorem established in
Subsection~\ref{sub2.2}, it follows that \eqref{eq-system} forms an
$\otimes$-basis of $L^p(I;\rX)$, and that its unique
$\otimes$-biorthogonal system is given by \eqref{eq-system2}.

\begin{thm}\label{T3.4}
Let $\rX$ be a UMD  Banach space and let $p\in(1,\infty)$. Then the system
$ \{\varphi_0^c, \  \varphi_n^c, \ \varphi_n^s\}_{n\in\mathbb N},
$
defined by \eqref{eq-system}, is $\otimes$-complete in
$L^p(I;\rX)$ and admits a unique $\otimes$-biorthonormal system
$
\{v_0^c, \  v_n^c, \ v_n^s\}_{n\in\mathbb N}.$
Moreover, the partial-sum operators
$$
P_n f =
t_0^c(f)\varphi_0^c +
\sum_{k=1}^n
\left[t_k^c(f)\varphi_k^c + t_k^s(f)\varphi_k^s
\right]
$$
where 
\begin{equation}\label{eq-t}
\begin{split}
t_0^c(f) &= \int_0^{2\pi}v_0^c(x)f(x)\,dx, \qquad 
t_n^c(f) =\int_0^{2\pi}v_n^c(x)f(x)\,dx,\\
t_n^s(f)&=\int_0^{2\pi}v_n^s(x)f(x)\,dx, \quad n\in \mathbb{N},
\end{split}
\end{equation}
with   $v_0^c$, $v_n^c$, and $v_n^s$  defined by \eqref{eq-system2},
are uniformly bounded on $L^p(I;\rX)$. More precisely, there exists a constant
$C>0$ independent of $n$ and $f$, such that
$$
\|P_n f\|_{L^p(I;\rX)} \leq C\|f\|_{L^p(I;\rX)}
$$
for every $f\in L^p(I;\rX)$ and every $n\in\mathbb N$.

All integrals are understood in the Bochner sense.

\end{thm}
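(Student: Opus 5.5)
The plan is to reduce everything to the trigonometric system $\mathcal{T}$ and use the $\otimes$-Riesz property from Proposition~\ref{propT}. The key observation is that the partial sums $P_n$ can be written explicitly through the cosine and sine partial-sum operators $S_n^c$ and $S_n^s$, applied to $f$ and to a weighted version of $f$.

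\emph{Step 1 (identity for $P_n$).} For $k\ge1$, insert \eqref{eq-system} and \eqref{eq-system2} into \eqref{eq-t}. For the sine part,
\[
t_k^s(f)\varphi_k^s(x)=\Bigl(-\frac{2k}{\pi^2}\int_I f(t)\sin(kt)\,dt\Bigr)\Bigl(-\frac{x}{2k}\sin(kx)\Bigr)=\frac{x}{\pi}\,\ell_k^s(f)\sin(kx).
\]
For the cosine part, set $g(t):=(2\pi-t)f(t)$, which lies in $L^p(I;\rX)$ with $\|g\|_{L^p(I;\rX)}\le 2\pi\|f\|_{L^p(I;\rX)}$. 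Then
\[
t_k^c(f)=\frac1\pi\,\ell_k^c(g),\qquad t_0^c(f)=\frac1\pi\,\ell_0^c(g).
\]
Hence, with all integrals in the Bochner sense,
\[
P_nf(x)=\frac1\pi\,S_n^c g(x)+\frac{x}{\pi}\,S_n^s f(x),\qquad x\in I.
\]

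\emph{Step 2 (uniform boundedness).} Since $\rX$ is UMD, Proposition~\ref{propT} gives $M:=\sup_n\bigl(\|S_n^c\|+\|S_n^s\|\bigr)<\infty$. Using $0\le x\le 2\pi$, we get
\[
\|P_nf\|_{L^p(I;\rX)}\le \frac{M}{\pi}\,2\pi\|f\|_{L^p(I;\rX)}+2M\|f\|_{L^p(I;\rX)}=4M\|f\|_{L^p(I;\rX)}.
\]
This is the required bound with $C=4M$, independent of $n$ and $f$.

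\emph{Step 3 (completeness).} From the proof of Proposition~\ref{propT}, with functions extended $2\pi$-periodically, we have $S_n^c g\to g^+$ and $S_n^s f\to f^-$ in $L^p(I;\rX)$. On $I$ these limits are
\[
g^+(x)=\tfrac12\bigl[(2\pi-x)f(x)+x f(2\pi-x)\bigr],\qquad f^-(x)=\tfrac12\bigl[f(x)-f(2\pi-x)\bigr].
\]
Multiplication by the bounded function $x/\pi$ is continuous on $L^p(I;\rX)$. Therefore
\[
P_nf\to\frac1\pi g^+ +\frac{x}{\pi}f^-=\frac1{2\pi}\bigl[(2\pi-x)f(x)+xf(x)\bigr]=f .
\]
Since each $P_nf$ belongs to $L_\otimes[\vec\varphi]$, the system \eqref{eq-system} is $\otimes$-complete in the sense of Definition~\ref{complete}.

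\emph{Step 4 ($\otimes$-biorthonormality and uniqueness).} Each functional $t$ in \eqref{eq-t} satisfies $t(a\varphi)=a\int_I v\varphi\,dx$ for $a\in\rX$, and it is bounded by H\"older's inequality, because $v\in L^{p'}(I)$. So \eqref{biortog} reduces to the scalar relations \eqref{eq-normal}, together with the cross relations
\[
\int_I v_n^c\varphi_m^s\,dx=0,\qquad \int_I v_n^s\varphi_m^c\,dx=0,\qquad \int_I v_0^c\varphi_m^{c,s}\,dx=0,\qquad \int_I v_n^{c,s}\varphi_0^c\,dx=0.
\]
I expect the main obstacle to be this elementary but careful verification. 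In particular $\int_I (2\pi-x)\,x\cos(nx)\sin(mx)\,dx=0$ is not immediate. I would first check it via the symmetry $x\mapsto 2\pi-x$: the weight $(2\pi-x)x$ is invariant under it while $\cos(nx)\sin(mx)$ is odd, so the integral vanishes; direct integration by parts is the fallback. Once biorthonormality holds, Lemma~\ref{L1} gives uniqueness of the $\otimes$-biorthonormal system. Together with Steps 2--3 and Theorem~\ref{T1}, this also yields that \eqref{eq-system} is a $\otimes$-basis.
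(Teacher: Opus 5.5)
Your proposal is correct. Steps 1--2 match the paper's proof exactly: the paper also writes $P_nf=\frac1\pi S_n^c g+\frac{x}{\pi}S_n^s f$ with $g=(2\pi-x)f$, and then invokes the $\otimes$-Riesz property of Proposition~\ref{propT}.

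The genuine difference is in Step 3. The paper gets $\otimes$-completeness in three moves:
\begin{enumerate}
\item it cites the scalar result \cite[Theorem 2.5]{B4} that \eqref{eq-system} is a basis of $L^p(I)$;
\item it multiplies that scalar expansion by $z\in\rX$;
\item it uses density of $L^p(I)\otimes\rX$ in $L^p(I;\rX)$.
\end{enumerate}
That route needs no UMD assumption for completeness, since it works for any Banach space $\rX$, but it relies on an external theorem.

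You instead identify the limit explicitly, $P_nf\to\frac1\pi g^+ +\frac{x}{\pi}f^-=f$, using $S_n^c g\to g^+$ and $S_n^s f\to f^-$ from the proof of Proposition~\ref{propT}. Your argument does use UMD. In exchange it is self-contained within the paper, and it shows directly that $P_nf\to f$ for every $f$. That is precisely the $\otimes$-basis property later used in the proof of Theorem~\ref{T3.1}, which the paper otherwise reaches only through Theorem~\ref{T1}. Taking $\rX=\mathbb{C}$, your argument even reproves the cited scalar result.

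Your Step 4 is more careful than the paper. The paper's \eqref{eq-normal} lists only the relations within each family, yet the cross relations you write down are needed for \eqref{biortog}. Your reflection argument $x\mapsto 2\pi-x$ for $\int_I(2\pi-x)x\cos(nx)\sin(mx)\,dx=0$ is correct.

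For your closing basis remark via Theorem~\ref{T1}, one small addition is needed. In the ordering $\varphi_0^c,\varphi_1^c,\varphi_1^s,\dots$ there are also the intermediate partial sums $\frac1\pi S_n^c g+\frac{x}{\pi}S_{n-1}^s f$. These are bounded by the same estimate as in your Step 2.
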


\begin{proof}
First, by construction, we have 
$ t_0^c, \, t_n^c, \, t_n^s \in [ L^p(I;\rX);\rX] $
and  for
$f\in L^p(I;\rX)$, we consider the expansion coefficients defined by \eqref{eq-t}. 
 
We next prove $\otimes$-completeness. By \cite[Theorem 2.5]{B4},
the system  $ 
\{\varphi_0^c, \, \varphi_n^c, \, \varphi_n^s\}_{n\in\mathbb N} $ 
is a basis of the scalar space $L^p(I)$. In particular, for every
$f\in L^p(I)$,
$$
f= \lim_{N\to\infty}
\left( t^c_0(f)\varphi_0^c+ \sum_{n=1}^N
\bigl(
t_n^c(f)\varphi_n^c+t_n^s(f)\varphi_n^s
\bigr)\right)
$$
in $L^p(I)$.
Let $z\in \rX.$
Multiplying  the above identity by $z$ gives
$$
fz= \lim_{N\to\infty}
\left( t^c_0(f)z\varphi_0^c+
\sum_{n=1}^N \bigl( t_n^c (f)z\varphi_n^c+
t_n^s(f) z\varphi_n^s \bigr)\right)
$$
in $L^p(I;\rX)$. 
Thus every elementary tensor
$f\otimes z=fz$ belongs to the closure of the $\otimes$-span of the
system. Since the algebraic tensor product
$L^p(I)\otimes\rX$ is dense in $L^p(I;\rX)$, it follows that
$$
\overline{
L_\otimes\left[ \{\varphi_0^c\} \cup
\{\varphi_n^c,\varphi_n^s\}_{n\in\mathbb N}
\right]}=L^p(I;\rX).
$$
Hence the system is $\otimes$-complete.

It remains to prove the uniform boundedness of the associated
projection operators. Define
$$
P_n^c(f) =
\sum_{k=0}^n t_k^c(f)\varphi_k^c,
\qquad
P_n^s(f)= \sum_{k=1}^n t_k^s(f)\varphi_k^s.
$$
Then $P_n=P_n^c+P_n^s$.

For the cosine part, set $ g(x)=(2\pi-x)f(x).$ 
Then 
$$
t_0^c(f) =
\frac{1}{\pi}\ell_0^c(g), \quad  t_k^c(f) = \frac{1}{\pi}
\ell_k^c(g).
$$
Therefore,
$$
P_n^c(f) = \frac1\pi
\sum_{k=0}^n \ell_k^c(g)\cos(kx) = \frac1\pi S_n^c g.
$$
By Proposition~\ref{propT}, the cosine partial-sum operators are
uniformly bounded on $L^p(I;\rX)$. Hence
$$
\|P_n^c(f)\|_{L^p(I;\rX)} \leq C \|g\|_{L^p(I;\rX)}\leq C  \|f\|_{L^p(I;\rX)}.
$$

For the sine part, from the definition of $v_k^s$ we have
$$
t_k^s(f) =
-\frac{2k}{\pi^2}
\int_0^{2\pi}f(x)\sin(kx)\,dx = -\frac{2k}{\pi}\ell_k^s(f).
$$
Since $ 
\varphi_k^s(x) =
-\frac{x}{2k}\sin(kx),$ 
it follows that
$$
t_k^s(f)\varphi_k^s(x) = \frac{x}{\pi} \ell_k^s(f)\sin(kx).
$$
Consequently,
$$
P_n^s(f)(x) =
\frac{x}{\pi} \sum_{k=1}^n \ell_k^s(f)\sin(kx).
$$
Again, by Proposition~\ref{propT}, the sine partial-sum operators are
uniformly bounded on $L^p(I;\rX)$. Since multiplication by $x$ is a
bounded operator on $L^p(I;\rX)$ and 
$$
\|P_n^s(f)\|_{L^p(I;\rX)}
\leq
C  \left\| \sum_{k=1}^n
\ell_k^s(f)\sin(kx) \right\|_{L^p(I;\rX)}
\leq C\|f\|_{L^p(I;\rX)},
$$
where $C$ is independent of $n$ and $f$. Finally
$$
\|P_n(f)\|_{L^p(I;\rX)}
\leq C  \|f\|_{L^p(I;\rX)}.
$$
\end{proof}

\section{Main results}

Consider the following $\rX$-valued nonlocal boundary value problem:
\begin{equation}\label{BVP}
\begin{cases}
\Delta u(x,y) = 0, &  (x,y)\in \Pi,\\
\Gamma_I u = f(x), & x\in (0,2\pi)\\
\Gamma_{J_0}u = \Gamma_{J_{2\pi}}u,&  y\in (0,\infty)\\
\Gamma_{J_0}(\partial_xu)=0,& y\in (0,\infty)
\end{cases}
\end{equation}
where $f\in W^2_p(I;\rX)$ is a given function.

By a solution of problem \eqref{BVP}, we mean a function
$  u\in W^2_{p,1}(\Pi;\rX) $ 
such that  Laplace equation holds almost everywhere in $\Pi$ and
the boundary conditions in \eqref{BVP} are satisfied in the trace sense.
\begin{thm}[Uniqueness]\label{P3.1}
Let $f\in W^2_p(I;\rX)$, with $1<p<\infty$. If problem
\eqref{BVP} admits a solution in $W_{p,1}^2(\Pi;\rX)$, then the   solution
is unique.
\end{thm}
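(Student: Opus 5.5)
By linearity, uniqueness reduces to the homogeneous problem. If $u_1,u_2\in W^2_{p,1}(\Pi;\rX)$ both solve \eqref{BVP}, then $u=u_1-u_2$ solves \eqref{BVP} with $f=0$, and we must show $u=0$. The plan is to project $u$ onto the root system \eqref{eq-system} using the biorthonormal functionals \eqref{eq-t} of Theorem~\ref{T3.4}. Concretely, for a.e.\ $y>0$ we set
\[
u_0^c(y)=t_0^c(u(\cdot,y)),\qquad u_n^c(y)=t_n^c(u(\cdot,y)),\qquad u_n^s(y)=t_n^s(u(\cdot,y)),
\]
which are $\rX$-valued functions of $y$. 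Since $u\in W^2_{p,1}$, the map $y\mapsto u(\cdot,y)$ belongs to $L^1((0,\infty);L^p(I;\rX))$, and the same holds for $u_{yy}$ and $u_{xx}$. The functionals $t^{c,s}_n$ are bounded from $L^p(I;\rX)$ to $\rX$, so each coefficient lies in $W^2_1((0,\infty);\rX)$ and $(u_n)''=t_n(u_{yy}(\cdot,y))$. Moreover $u_n(0)=t_n(\Gamma_I u)=0$, by continuity of the trace, and $u_n(y)\to0$ as $y\to\infty$, because a $W^1_1$ function on a half-line has a limit at infinity that must be zero.

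Next we use the equation $u_{yy}=-u_{xx}$ and integrate by parts twice in $x$ against $v_n$, so that
\[
t_n(u_{xx}(\cdot,y))=\int_0^{2\pi}v_n''u\,dx+\text{boundary terms}.
\]
The boundary terms involve $u,u_x$ at $x=0,2\pi$ and $v_n,v_n'$ there. The conditions $u(0,y)=u(2\pi,y)$ and $u_x(0,y)=0$ hold in the trace sense for a.e.\ $y$, and by Fubini and the one-dimensional Sobolev embedding they hold pointwise for a.e.\ $y$. The biorthogonal functions \eqref{eq-system2} are exactly the root functions of the adjoint problem. One checks directly that $v_n(2\pi)=0$ and $v_n'(0)=v_n'(2\pi)$ for all three families, and these kill every boundary term: $u_x(2\pi)$ is multiplied by $v_n(2\pi)=0$, $u_x(0)=0$, and the $u$-terms combine as $u(0)(v_n'(0)-v_n'(2\pi))=0$. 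What remains is computing $v_n''$:
\[
(v_0^c)''=0,\qquad (v_n^s)''=-n^2v_n^s,\qquad (v_n^c)''=-n^2v_n^c+\tfrac{2n}{\pi^2}\sin(nx)=-n^2v_n^c-v_n^s .
\]
This yields the ODE system
\[
(u_0^c)''=0,\qquad (u_n^s)''=n^2u_n^s,\qquad (u_n^c)''=n^2u_n^c+u_n^s,
\]
up to the sign conventions, which must be checked carefully.

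We then solve these Banach-valued ODEs; the standard theory works verbatim, for instance by applying $x^*\in\rX^*$ and reducing to scalar ODEs. For $u_0^c$, linearity together with $u_0^c(0)=0$ and $u_0^c\to0$ at infinity forces $u_0^c\equiv0$; equivalently, integrability on $(0,\infty)$ forces the linear function to vanish. For $u_n^s=Ae^{ny}+Be^{-ny}$, integrability gives $A=0$, and $u_n^s(0)=0$ gives $B=0$. Then $u_n^c$ solves the same homogeneous equation, so the same argument gives $u_n^c\equiv0$. Hence every coefficient vanishes for a.e.\ $y$. By Theorem~\ref{T3.4} the system is a $\otimes$-basis, so $u(\cdot,y)=\lim_n P_n u(\cdot,y)=0$ in $L^p(I;\rX)$ for a.e.\ $y$, and thus $u=0$.

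The main obstacle is the rigorous justification of the integration by parts in $x$ for a.e.\ fixed $y$ at the $W^{2}_{p,1}$ regularity level, with boundary conditions known only as traces. I would handle this with the density of smooth functions (Proposition on the segment condition): establish the identity $t_n(u_{xx}(\cdot,y))=\int v_n''u\,dx+\text{boundary terms}$ in integrated form, $\int_0^\xi\cdots\,dy$, for smooth approximants. I would then pass to the limit using the boundedness of $\Gamma_{J_k}|_{J_k^\xi}$ into $L^1$. Applying this to the traces of $u$ and $\partial_x u$ (the latter in $W^1_{p,1}$) makes the boundary terms vanish a.e.\ in $y$.
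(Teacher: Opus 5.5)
Your argument is sound, but it takes a different route from the paper and, as written, covers only UMD spaces.

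\textbf{Comparison with the paper.} The paper scalarizes. For each $v\in\rX^*$, the function $v(w)\in W^2_{p,1}(\Pi)$ solves the scalar homogeneous problem, hence vanishes a.e.\ by scalar uniqueness, which the paper invokes without proof. A countable norming family on a separable subspace carrying the values of $w$ then gives $w=0$.

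You instead work directly with the $\rX$-valued coefficients $t_n(u(\cdot,y))$. Your computations check out. The adjoint conditions $v_n(2\pi)=0$ and $v_n'(0)=v_n'(2\pi)$ do kill all boundary terms, and $(v_n^c)''=-n^2v_n^c-v_n^s$ is right. So is the triangular system $(u_0^c)''=0$, $(u_n^s)''=n^2u_n^s$, $(u_n^c)''=n^2u_n^c+u_n^s$. Integrability on $(0,\infty)$ together with the zero datum at $y=0$ then kills every coefficient.

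In effect you prove the uniqueness that the paper delegates to the scalar theory. The cost is the bookkeeping you flag yourself. You must identify $\Gamma_I u$, $\Gamma_{J_0}u$, $\Gamma_{J_{2\pi}}u$ and $\Gamma_{J_0}(\partial_x u)$ with pointwise values for a.e.\ $y$. You must also justify the weak $y$-derivatives of the coefficient functions. The scalarization route avoids both by outsourcing them.

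\textbf{The step to repair.} The problem is your last step. Theorem~\ref{P3.1} assumes only that $\rX$ is a Banach space, while Theorem~\ref{T3.4} is proved only for UMD spaces. You do not actually need the $\otimes$-basis property. You only need that $t_n(g)=0$ for all $n$ forces $g=0$ in $L^p(I;\rX)$, and this holds in any Banach space:
\begin{enumerate}
\item Vanishing of all $t_n^s(g)$ means every Fourier coefficient of $g(x)-g(2\pi-x)$ vanishes, so $g(x)=g(2\pi-x)$.
\item Vanishing of all $t_n^c(g)$, $n\ge0$, means every Fourier coefficient of $(2\pi-x)g(x)+x\,g(2\pi-x)$ vanishes. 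By the first step this function equals $2\pi g(x)$, so $g=0$.
\end{enumerate}
This uses only uniqueness of vector-valued Fourier coefficients, which holds in every Banach space, for instance via Fej\'er means or scalarization. Alternatively, apply $x^*\in\rX^*$, use the scalar basis property of \eqref{eq-system} in $L^p(I)$, and finish with a norming sequence as the paper does. Nothing else in your argument uses the UMD property.
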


\begin{proof}
Let $u_1,u_2\in W_{p,1}^2(\Pi;\rX)$ be two solutions of
\eqref{BVP} corresponding to the same boundary datum $f$. Set
$ w=u_1-u_2.$ 
Then $w\in W_{p,1}^2(\Pi;\rX)$ and satisfies the homogeneous problem
\begin{equation}\label{E3.3}
\begin{cases}
\Delta w(x,y)=0, & \text{in }\Pi,\\
\Gamma_I w=0, & x\in I\\
\Gamma_{J_0}w=\Gamma_{J_{2\pi}}w, & y\in (0,\infty)\\
\Gamma_{J_0}(\partial_xw)=0,& y\in (0,\infty).
\end{cases}
\end{equation}
Indeed, the first boundary condition follows from
$$
\Gamma_I w =
\Gamma_I u_1-\Gamma_I u_2 =0.
$$
The remaining boundary conditions follow by linearity of the trace operators.

It therefore suffices to prove that every solution of
\eqref{E3.3} is identically zero.
Let $v\in\rX^*$ be arbitrary and define the scalar-valued function
$ w_v(x,y):=v(w(x,y)). $ 
Since $v$ is a bounded linear functional on $\rX$, we have
$ 
w_v\in W_{p,1}^2(\Pi)$ 
and, for every multi-index $\alpha$ with $|\alpha|\leq  2$,
$ \partial^\alpha w_v =
v(\partial^\alpha w) $ 
in the weak sense. Consequently,
$$
\Delta w_v =
v(\Delta w) = 0 \quad \text{ a.e. in } \ \Pi.
$$

Moreover, the continuity and linearity of the trace operators imply
that $w_v$ satisfies the corresponding scalar homogeneous boundary
conditions:
$$
\Gamma_I w_v=0,
\quad\Gamma_{J_0}(\partial_xw_v)=0,
\quad
\Gamma_{J_0}w_v=\Gamma_{J_{2\pi}}w_v.
$$
Thus for every $v\in \rX^*$
$ v(w)\equiv0 $ almost everywhere  in $\Pi.$

Since $w$ is strongly measurable, there exists a separable closed
subspace $\rX_0\subset\rX$ such that
$w(x,y)\in\rX_0$
for almost every $(x,y)\in\Pi$. Let
${v_j}_{j\in\mathbb N}\subset\rX^*$ be a countable norming family for $\rX_0$, so that
$$
\|z\|_{\rX} =
\sup_{j\in\mathbb N}|v_j(z)|,
\qquad z\in\rX_0.
$$
Since $v_j(w)=0$ almost everywhere in $\Pi$ for every
$j\in\mathbb N$, there exists a set of full measure in $\Pi$ on which
$v_j(w(x,y))=0$ 
for every $j\in\mathbb N.$

Hence, on this set,
$$
\|w(x,y)\|_{\rX}= \sup_{j\in\mathbb N}|v_j(w(x,y))| = 0.
$$
Thus
$w=0$ a.e. in $\Pi.$
Therefore, $u_1=u_2 $ almost everywhere in $\Pi.$

\end{proof}

The following result establishes the existence of a solution and provides
an a priori estimate.

\begin{thm}[Existence]\label{T3.1}
Let $\rX$ be a UMD Banach space and let $1<p<\infty$. Suppose that
$f\in W_p^2(I;\rX)$ satisfies
$
f(0)=f(2\pi)=0, $ $
f'(0)=0, $ 
and the compatibility condition
$$
\int_0^{2\pi}(2\pi-x)f(x)\,dx=0.
$$
Then the boundary value problem \eqref{BVP} admits a unique solution
$
u\in W_{p,1}^2(\Pi;\rX).
$
Moreover, there exists a constant $C>0$, independent of $f$, such that
$$
\|u\|_{W_{p,1}^2(\Pi;\rX)}
\leq C\|f\|_{W_p^2(I;\rX)}.
$$

\end{thm}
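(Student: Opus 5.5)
The plan is to construct the solution by the generalized spectral method of Section~3, expanding $f$ in the root system \eqref{eq-system}. Uniqueness is already given by Theorem~\ref{P3.1}, so only existence and the estimate need proof.

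\textbf{Step 1: the modal ansatz.} Expand
$$
f=t_0^c(f)\varphi_0^c+\sum_{n\ge1}\bigl[t_n^c(f)\varphi_n^c+t_n^s(f)\varphi_n^s\bigr]
$$
in $L^p(I;\rX)$, which is legitimate by Theorem~\ref{T3.4}. Each mode is extended harmonically into $\Pi$ with decay as $y\to\infty$.

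For the eigenfunction $\cos(nx)$ take $e^{-ny}\cos(nx)$. For the associated function use $\varphi_n^s''+n^2\varphi_n^s=-\cos(nx)$. A direct computation gives
$$
\Delta\bigl(e^{-ny}\varphi_n^s(x)\bigr)=-e^{-ny}\cos(nx),\qquad \Delta\bigl(ye^{-ny}\cos(nx)\bigr)=-2ne^{-ny}\cos(nx).
$$
Hence the harmonic correction is $e^{-ny}\varphi_n^s(x)-\frac{y}{2n}e^{-ny}\cos(nx)$. The candidate solution is therefore
$$
u(x,y)=\sum_{n\ge1}e^{-ny}\Bigl[\Bigl(t_n^c(f)-\tfrac{y}{2n}t_n^s(f)\Bigr)\cos(nx)+t_n^s(f)\varphi_n^s(x)\Bigr].
$$

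The zero mode would have to solve $\psi''=0$ and be integrable on $(0,\infty)$, which forces $\psi\equiv0$. This is exactly why the compatibility condition $t_0^c(f)=\frac{1}{2\pi^2}\int_0^{2\pi}(2\pi-x)f\,dx=0$ is imposed.

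Every mode is harmonic and satisfies the nonlocal conditions $\varphi(0)=\varphi(2\pi)$ and $\varphi'(0)=0$. Once convergence in $W^2_{p,1}$ is established, the boundary conditions pass to the limit by continuity of the traces $\Gamma_{J_k}$ on each $\Pi_\xi$, and $\Gamma_Iu=f$ holds by Theorem~\ref{T3.4}.

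\textbf{Step 2: coefficient identities.} Integrate by parts twice in \eqref{eq-t}, using $f(0)=f(2\pi)=0$ and $f'(0)=0$ to kill the boundary terms. This should express $n^2t_n^c(f)$ and $t_n^s(f)/n$, modulo lower-order pieces involving $f$ and $f'$, as cosine and sine coefficients $\ell_n^{c}(g_1)$ and $\ell_n^{s}(g_2)$ of functions with $\|g_i\|_{L^p(I;\rX)}\le C\|f\|_{W^2_p(I;\rX)}$.

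\textbf{Step 3: the estimate, which is the main obstacle.} Naive termwise bounds do not suffice. For example, $\|\partial_x^2(e^{-ny}t_n^c\cos nx)\|_{L^{p,1}}\sim n\|t_n^c\|$, and with $\|t_n^c\|\lesssim n^{-2}$ this gives the divergent sum $\sum 1/n$. So the argument must avoid absolute convergence.

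Instead, for fixed $y$, write each second derivative $\partial^\alpha u(\cdot,y)$ as $\sum_n m_n(y)\,\ell_n(g)\,\phi_n(x)$, where $\phi_n\in\{\cos nx,\sin nx\}$, possibly multiplied by $x$ or by $1$. The multipliers are of the form $m_n(y)=e^{-ny}$, $nye^{-ny}$, or $n^2y^2e^{-ny}$ divided by suitable powers of $n$.

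By Abel summation together with the $\otimes$-Riesz property (Proposition~\ref{propT}),
$$
\|\partial^\alpha u(\cdot,y)\|_{L^p(I;\rX)}\le C\Bigl(\sup_n|m_n(y)|+\sum_n|m_{n+1}(y)-m_n(y)|\Bigr)\|g\|_{L^p(I;\rX)}.
$$
For $y\ge1$ the map $n\mapsto m_n(y)$ is monotone, so the bracket is at most $C(1+y)^2e^{-y}$. For $y<1$ it is bounded by an absolute constant, since functions like $t\mapsto te^{-t}$ and $t^2e^{-t}$ have finite total variation. In both cases the bracket is integrable on $(0,\infty)$. Integrating in $y$ then gives $\|u\|_{W^2_{p,1}(\Pi;\rX)}\le C\|f\|_{W^2_p(I;\rX)}$.

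Applied to partial sums, the same bound shows that the partial sums form a Cauchy sequence in $W^2_{p,1}(\Pi;\rX)$ by dominated convergence in $y$. This justifies the term-by-term differentiation and yields the solution.
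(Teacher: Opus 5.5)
Your modal series is the paper's: your $-\frac{y}{2n}t_n^s(f)$ and $t_n^s(f)\varphi_n^s$ are its $y\,v_n(0)$ and $v_n(0)\,x\sin nx$, with $v_n(0)=-t_n^s(f)/(2n)$. The zero mode, the boundary conditions and uniqueness are also handled the same way. The difference is Step~3, and there you have a concrete gap.

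The Abel-summation bound you draw from the $\otimes$-Riesz property controls only series of matched parity, i.e. $\sum_n m_n\ell_n^c(h)\cos nx$ or $\sum_n m_n\ell_n^s(h)\sin nx$. For $u$, its first derivatives and the pure second derivatives you can arrange this by stopping the integration by parts at the right level. For instance, in $\partial_y^2(ye^{-ny}b_n\cos nx)$ you use $nb_n=\frac1\pi\ell_n^c(f')$, which is where your multiplier $nye^{-ny}$ comes from. This cannot be done for the mixed derivative. With $a_n=t_n^c(f)$ and $b_n=-t_n^s(f)/(2n)$,
\[
\partial_x\partial_y u=\sum_{n\ge1}e^{-ny}\bigl[n^2a_n\sin nx-n^2b_n\,x\cos nx-nb_n(2-ny)\sin nx\bigr].
\]
Integration by parts only gives $n^2a_n=-\frac1\pi\ell_n^c(g'')=-\frac n\pi\ell_n^s(g')$ with $g=(2\pi-x)f$, and $n^2b_n=-\frac1\pi\ell_n^s(f'')=\frac n\pi\ell_n^c(f')$. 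So the first two series either have mismatched parity, or matched parity with multiplier $ne^{-ny}$. In the latter case the bracket $\sup_n|m_n|+\sum_n|m_{n+1}-m_n|$ is of order $1/y$, which is not integrable at $y=0$.

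You therefore need one more ingredient. For UMD $\rX$ the conjugate function $\tilde h=-i(R_1^+h-R_0^-h)$ is bounded on $L^p(I;\rX)$ by Theorem~\ref{T2.2}, and $\ell_n^s(\tilde h)=\ell_n^c(h)$, $\ell_n^c(\tilde h)=-\ell_n^s(h)$. Replacing $g''$ and $f''$ by $\widetilde{g''}$ and $\widetilde{f''}$ makes both series matched with multiplier $e^{-ny}$, and your argument then closes. Alternatively, the conjugate Poisson kernel at height $y$ has $L^1$-norm comparable to $\log\frac{1+e^{-y}}{1-e^{-y}}$, which is integrable on $(0,\infty)$.

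With that repair your route is sound, and it differs from the paper's. The paper bounds each series absolutely, $\|\sum_n c_ne^{-ny}\phi_n\|_{L^p(I;\rX)}\le C\sum_n\|c_n\|_\rX e^{-ny}$. After integrating in $y$ this reduces to $\sum_n\|c_n\|_\rX/n$, which it controls by H\"older and an $\rX$-valued Hausdorff--Young inequality. That approach is blind to sine/cosine parity, so the mixed derivative costs nothing extra. However, it rests on a Fourier-type property of $\rX$ that UMD does not provide in the form used. Moreover, for $p>2$ the step $\|(\ell_n^s(f''))_n\|_{\ell^{p'}}\le C\|f''\|_{L^p}$ fails even for scalar functions. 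Your Abel-summation argument needs only uniformly bounded partial sums, which is exactly what UMD supplies; its price is the parity bookkeeping above.
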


\begin{proof}
In analogy with the scalar case, we  consider the formal
series
\begin{equation}\label{E3.4}
u(x,y) = u_0+ \sum_{n=1}^{\infty}
\left( u_n(y)\cos(nx)+v_n(y)x\sin(nx)\right),
\qquad (x,y)\in\Pi,
\end{equation}
where the coefficients are determined by the   system
\eqref{eq-system2}. 
The boundary  conditions  and the compatibility condition give
\begin{equation}\label{eq-coef0}
\begin{split}
u_0&= t_0^c(f)= \frac{1}{2\pi^2}
\int_0^{2\pi}f(x)(2\pi-x)\,dx=0,\\
u_n(0)&=t_n^c(f) = \frac{1}{\pi^2}
\int_0^{2\pi}(2\pi-x)f(x)\cos(nx)\,dx=\frac{1}{\pi} \ell_n^c(g),\\
v_n(0) &=
-\frac{1}{2n}t_n^s(f) = \frac{1}{\pi^2} \int_0^{2\pi}f(x)\sin(nx)\,dx=\frac{1}{\pi}\ell_n^s(f).
\end{split}
\end{equation}

To have    
\begin{align*}
\Delta u &= \sum_{n=1}^{\infty}
\Big[
\left(u_n''(y)-n^2u_n(y)+2nv_n(y)\right)\cos(nx)\\
&+ \left(v_n''(y)-n^2v_n(y)\right)x\sin(nx)
\Big]=0
\end{align*}
we need that 
$$
v_n''(y)-n^2v_n(y)=0, \qquad u_n''(y)-n^2u_n(y)+2nv_n(y)=0.
$$
We select the solutions which decay as $y\to \infty.$  Hence by \eqref{eq-coef0}
\begin{align*}
v_n(y) &=v_n(0) e^{-ny}= \frac{1}{\pi} \ell_n^s(f) e^{-ny},\\
u_n(y) & =
\frac{e^{-ny}}{\pi^2}\left[ 
\int_0^{2\pi}(2\pi-x)f(x)\cos(nx)\,dx +y
\int_0^{2\pi}f(x)\sin(nx)\,dx\right].
\end{align*}

Thus
\begin{equation}\label{eq-series}
\begin{split}
u(x,y)=&\sum_{n=1}^{\infty}
\frac{1}{\pi^2}
\left(\int_0^{2\pi}(2\pi-\xi)f(\xi)\cos(n\xi)\,d\xi
\right) 
\cos(nx) e^{-ny}\\
&+\sum_{n=1}^{\infty}
\frac{y}{\pi^2}
\left( \int_0^{2\pi}f(\xi)\sin(n\xi)\,d\xi\right) \cos(nx) e^{-ny}
\\
&+\sum_{n=1}^{\infty}
\frac{1}{\pi^2}
\left(\int_0^{2\pi}f(\xi)\sin(n\xi)\,d\xi\right)  x\sin(nx) e^{-ny}.
\end{split}
\end{equation}
Direct calculations show that $\Delta u =0.$

We now  prove that the series in \eqref{eq-series}, together with its weak derivatives of order up to  two, converges in $L^{p,1}(\Pi;\rX).$ This will imply that  $u\in W^2_{p,1}(\Pi;\rX)$. 

We first consider
$$
u_1(x,y)=\sum_{n=1}^{\infty}v_n(y)x\sin(nx)=\frac{x}{\pi} \sum_{n=1}^{\infty} \ell_n^s(f) \sin (nx) e^{-ny}.
$$
Since $f \in W_p^2(I;\rX)$ with $1<p<\infty$, the one-dimensional
Sobolev embedding gives $ 
f,f'\in C(\overline{I};\rX).$ 
Integrating by parts and using the boundary conditions  we have
$ \ell_n^s(f)=-\frac{1}{n^2}\ell_n^s(f''). $
Therefore,
$$
u_1(x,y)
=-\frac{x}{\pi}\sum_{n=1}^{\infty}
\frac{1}{n^2}\ell_n^s(f'')\sin(nx)e^{-ny}.
$$
Direct calculations give 
\begin{align*}
\frac{\partial^2u_1}{\partial x^2}
&= \frac{x}{\pi}\sum_{n=1}^{\infty}  
\ell_n^s(f'')\sin(nx) e^{-ny}
-\frac{2}{\pi}\sum_{n=1}^{\infty}
\frac{\ell_n^s(f'')}{n}\cos(nx) e^{-ny},\\
\frac{\partial^2u_1}{\partial y^2}
&=-\frac{x}{\pi}\sum_{n=1}^{\infty} 
\ell_n^s(f'')\sin(nx) e^{-ny},\\
\Delta u_1&= -\frac{2}{\pi}\sum_{n=1}^{\infty}
\frac{\ell_n^s(f'')}{n}\cos(nx) e^{-ny}.
\end{align*}
Hence 
$$
\|u_1\|_{W^2_{p,1}(\Pi;\rX)}\leq C \|f''\|_{L^p(I;\rX)}\leq C\|f\|_{W^2_p(I;\rX)}.
$$

Consider now the term 
$$
u_2(x,y) = -
\sum_{n=1}^{\infty} n^2v_n(y)x\sin(nx)
= \frac{x}{\pi}
\sum_{n=1}^{\infty}
\ell_n^s(f'')\sin(nx)e^{-ny}.
$$

To estimate $\|u_2\|_{L^{p;1}(\Pi; \rX)}$, we distinguish two cases.

\textbf{I.} Suppose that $p > 2$. Then, $p' \in (1, 2)$. By  the  $\rX$-valued \textit{Hausdorff--Young inequality}, we obtain
\begin{align*}
\left( \int_0^{2\pi} \| u_2(x,y) \|_{\rX}^p\, dx \right)^{1/p} &\leq C \left( \sum_{n=1}^{\infty} \|  \ell_n^s(f'') e^{-n y}\|_{\rX}^{p'}  \right)^{1/p'}\\
&\leq C \sum_{n=1}^{\infty} \|  \ell_n^s(f'') \|_{\rX}  e^{-ny} .
\end{align*}
In the last step, we used the imbedding $l^{p'}\hookrightarrow l^1$, $p'\geq 1$. Then
\begin{align*}
\| u_2 \|_{L^{p;1}(\Pi; \rX)} &\leq C \sum_{n=1}^{\infty} \|  \ell_n^s(f'') \|_{\rX} \int_0^{\infty} e^{-ny} \,dy
=C \sum_{n=1}^{\infty} \frac{\|  \ell_n^s(f'') \|_X}{n} \\
&\leq C \left( \sum_{n=1}^\infty \|\ell_n^s(f'')\|^{p'}_\rX  \right)^{1/p'} \left( \sum_{n=1}^\infty \frac{1}{n^p} \right)^{1/p} \\ &\leq C\|f''\|_{L^p(I;\rX)}\leq C \|f\|_{W^2_p(I;\rX)}.
\end{align*}


\textbf{II.} Suppose that  $1 < p \leq 2$. Then $p'\geq 2.$ Since $I$ has a finite measure, we have
$$
\left( \int_0^{2\pi} \| u_2(x,y) \|_{\rX}^p\, dx \right)^{1/p} \leq C \left( \int_0^{2\pi} \| u_2(x,y) \|_{\rX}^{p{'}} \, dx \right)^{1/p{'}}.
$$

Applying again the  $\rX$-valued Hausdorff--Young inequality, we obtain
\begin{align*}
\left( \int_0^{2\pi} \| u_2(x,y) \|_{\rX}^p\, dx \right)^{1/p} &\leq C \left( \sum_{n=1}^{\infty} \| \ell_n^s(f'') e^{-ny}\|_X^p  \right)^{1/p}\\
&\leq C \sum_{n=1}^{\infty} \|  \ell_n^s(f'') \|_X e^{-ny}.
\end{align*}
The desired estimate then follows as in Case~\textbf{I}.

Arguing analogously, we estimate the remaining terms in the representation of $u(x,y)$ obtaining
$$
\| u \|_{W_{p;1}^2(\Pi; \rX)} \leq C \| f \|_{W^2_p(I; \rX)}.
$$

We now verify   the boundary conditions in \eqref{BVP}. First, we show that $\Gamma_I u = f$. Since 
$$
\Gamma_{I} \in \left[ W_{p,1}^2(\Pi; \rX), W^1_p(I; \rX)\right],
$$
 it follows that $u_m \to u$  in $W_{p,1}^2(\Pi; \rX)$ implies  $\Gamma u_m \to \Gamma u$ in $W^1_p(I; \rX)$. 

Consider the partial sums
$$
u_m(x,y) =  \sum_{n=1}^m \left( u_n(y) \cos(nx) + v_n(y) x \sin(nx) \right), \quad   \text{ in } \Pi.
$$
Then 
$$
\Gamma_I u_m(x,y) =  \sum_{n=1}^m \left[  u_n(0) \cos(nx) + v_n(0) x \sin(nx) \right].
$$
If $u \in C^2(\overline{\Pi}; H)$, then $\Gamma_S u = u|_S$, for every half--line $S \subset \overline{\Pi}$. 
It is easy to verify  that 
$$
\left\{ u_n(y) \cos(nx), \ v_n(y) \,x \sin(nx) \right\} \subset C^2( \overline{\Pi}; \rX).
$$

Using \eqref{eq-series}, we obtain
\begin{equation}\label{E3.5}
\begin{split}
\Gamma_I u_m =&   \sum_{n=1}^m  \frac{1}{\pi^2} \left(\int_0^{2\pi} (2\pi - \xi) f(\xi) \cos(n\xi)\, d\xi\right) \cos(nx)\\
&+\frac{1}{\pi^2} \left(\int_0^{2\pi} f(\xi) \sin (n\xi) \,d\xi \right)  x\sin(nx), \qquad m\in \N.
\end{split}
\end{equation}

Since by Theorem \ref{T3.4}, the system \eqref{eq-system} forms a $\otimes$-basis in $L^p(I; \rX)$, it follows  that the right-hand side   of \eqref{E3.5} converges to $f$ in $L^p(I; \rX)$.

On the other hand,  $\Gamma_I u_m \to \Gamma_I u$ in $L^p(I; \rX)$ as $m\to \infty$.  Hence  by the uniqueness of the limit in $L^p(I;\rX)$ we have that  $\Gamma_I u = f$. 

The remaining boundary conditions  are verified analogously.

\end{proof}

\paragraph{\textbf{Acknowledgments.}} 
The research of B.~Bilalov and S.~Sadigova 
 is supported by the Azerbaijan Science Foundation-Grant no. AEF-MGC-2024-2(50)-16/02/1-M-02

P.~Salerno and L.~Softova are members of INDAM-GNAMPA.
The research of L.~Softova is partially supported by the project  "AI Magister" CUP B47H22004440001 and the FARB 300396FRB25SOFTO.

\paragraph{\textbf{Compliance with Ethical Standards}.}
The authors declare that they have no conflict of interest.

\paragraph{\textbf{Data Availability Statement}.}
Data sharing is not applicable to this article as no datasets were generated or analysed during the current study.

\paragraph{\textbf{ Author Contributions}.}
All authors contributed equally to the conception  of the study and 
the development of the mathematical arguments.
 All authors have read and approved the final version of the manuscript.

\end{document}